\newcommand{\e}{\mathrm{e}}
\newcommand{\R}{\mathbb R}
\newcommand{\N}{\mathbb N}
\newcommand{\brho}{\boldsymbol{\rho}}
\newcommand{\bW}{\boldsymbol{W}}
\newcommand{\eps}{\epsilon}
\renewcommand{\phi}{\varphi}
\newcommand{\mEF}{\mathcal{EF}}
\renewcommand{\:}{\mathrel{\coloneqq}}
\newcommand{\OT}{{\Omega_{T}}}
\newcommand{\loc}{\textnormal{loc}}
\newcommand{\dd}{\ensuremath{\,\mathrm{d}}}
\DeclareMathOperator{\sgn}{sgn}
\newcommand{\bq}{\ensuremath{\boldsymbol{q}}}
\newcommand{\cW}{\ensuremath{\mathcal{W}}}
\renewcommand{\epsilon}{\varepsilon}
\crefname{assumption}{Asm.}{Assumptions}
\crefname{definition}{Def.}{Definitions}
\DeclareMathOperator{\supp}{supp}
\newtheorem{assumption}{Assumption}
\newtheorem{definition}{Definition}
\newtheorem{remark}{Remark}
\newtheorem{theorem}{Theorem}[section]
\newtheorem{corollary}{Corollary}[theorem]
\newtheorem{lemma}[theorem]{Lemma}
\newtheorem*{problem1}{Nonlocal problem}
\newtheorem*{problem2}{Local problem}
\title{On the singular limit problem in nonlocal balance laws: Applications to nonlocal lane-changing traffic flow models}
\author[<1>]{Felisia Angela Chiarello}
\ead{<felisiaangela.chiarello@univaq.it>}
\affiliation[<1>]{organization={University of L'Aquila, Department of Engineering and Information Science and Mathematics,
(DISIM)},
            addressline={Via Vetoio, Ed. Coppito 1}, 
            city={L'Aquila},
            postcode={67100}, 
            country={Italy}}
\author[<2>]{Alexander Keimer}
\affiliation[<2>]{organization={Friedrich-Alexander-Universität Erlangen-Nürnberg (FAU), Department of Mathematics},
            addressline={Cauerstr. 11}, 
            city={Erlangen},postcode={91058}, 
            country={Germany}}
 \ead{<alexander.keimer@fau.de>}
\date{\today}
\begin{document}
\begin{abstract}
 We present a convergence result from nonlocal to local behavior for a system of nonlocal balance laws. The velocity field of the underlying conservation laws is diagonal. In contrast, the coupling to the remaining balance laws involves a nonlinear right-hand side that depends on the solution, nonlocal term, and other factors. The nonlocal operator integrates the density around a specific spatial point, which introduces nonlocality into the problem.
 Inspired by multi-lane traffic flow modeling and lane-changing, the nonlocal kernel is discontinuous and only looks downstream.
 In this paper, we prove the convergence of the system to the local entropy solutions when the nonlocal operator (chosen to be of an exponential type for simplicity) converges to a Dirac distribution.
Numerical illustrations that support the main results are also presented.
\end{abstract}
\begin{keyword}
Nonlocal balance law, singular limit problem, convergence to the entropy solution, lane-changing, traffic flow modeling
\MSC[2010]{35L65; 90B20}
\end{keyword}
\maketitle
\section{Introduction and problem setup}
Conservation laws with nonlocal fluxes are frequently used in vehicular traffic modeling. These models aim to describe drivers who adjust their velocity based on conditions ahead of them,see~\cite{Chiarello2023KRM, chiarello, chiarello2019non-local, chiarelloFriedrichGoatinGK,  friedrich2018godunov, pflug,keimer1}. 
There are general existence and uniqueness results for nonlocal conservation laws, as discussed in \cite{teixeira, pflug} for scalar equations in one space dimension, \cite{colombo, spinola} for multi-dimensional scalar equations, and \cite{aggarwal} for multi-dimensional systems. Two different primary approaches are commonly employed to establish solutions for these models: One approach provides suitable compactness estimates for a sequence of approximate solutions constructed through finite volume schemes, as in \cite{ blandin2016well, friedrich2018godunov, chiarello}. The other approach relies on characteristics and fixed-point theorems, as proposed in \cite{pflug, spinola}. 
Nonlocal conservation laws on a bounded domain have been studied in \cite{Filippis, goatin2019wellposedness, keimer1}, and in \cite{colombo2018nonlocal} for multi-dimensional nonlocal systems using similar methods as described above.
This study focuses on the singular limit problem of nonlocal conservation laws within the context of systems consisting of two (or more) equations. Specifically, we aim to establish the convergence of nonlocal solutions to the entropy-admissible solution of the local conservation law. This convergence occurs when we replace the convolution kernel with a Dirac delta function.  
This problem was initially posed in~\cite{amorim}, where the authors conducted a numerical investigation.
Subsequently, several authors studied the nonlocal-to-local convergence for the general scalar one-dimensional case without specific assumptions regarding the kernel function and the initial density. In particular, some counter-examples rule out convergence in the general case, see~\cite{ColomboCrippaMarconiSpinolo2021}. 
On the contrary, within the specific framework of traffic models, which includes anisotropic convolution kernels and nonnegative density, the singular limit has been established in the scalar case for nonlocal conservation laws. This has been achieved in the case of the exponential kernel~\cite{coclite2020general} or by imposing monotonicity requirements on the initial datum~\cite{pflug4}. Recently, a more general result was obtained in~\cite{Marconi2023},
which considers the convexity assumption for the convolution kernels. 
In~\cite{bressan}, the authors demonstrated nonlocal-to-local convergence by considering an initial datum with bounded total variation bounded away from zero and an exponential weight. Moreover, the group established that the solution approaches an entropic state in the limit, assuming $V$ is an affine function. This extension of the result in~\cite{bressan2021entropy} applies to more general fluxes.
In \cite{keimer42}, the authors studied the same singular limit problem but for kernels with fixed support. They obtained the convergence to the local entropy solution in these cases.

In addition, there is also a recent study on the nonlocal \(p-\)norm \cite{amadori2023nonlocal}, where, under rather general assumptions and for sufficiently large \(p\) large an Oleinik \cite{oleinik_english,oleinik} type inequality is derived. This inequality ensures the immediate convergence to the local entropy solution.
Such an Oleinik estimate had also been obtained for the earlier mentioned ``classical'' singular limit problem in \cite{coclite2023oleinik} using additional constraints on the involved velocities and/or the initial datum.

However, none of the previously mentioned studies have addressed systems of nonlocal balance laws and their singular limit, which is one of the reasons why we explore these in this paper.
We obtain a convergence result with potential applications in traffic models if we consider a \textit{system} of nonlocal balance laws (two equations) with lane-changing functions on the right-hand side and exponential kernels in the flux functions. This can be formulated as follows:
\begin{align}
    \partial_{t}\brho + \partial_{x}\big(\boldsymbol{V}(\gamma\ast\brho)\brho\big)=\boldsymbol{S}(\brho,\gamma\ast\brho)\quad \overset{\gamma\rightarrow\delta}{\longrightarrow} \quad \partial_{t}\brho + \partial_{x}\big(\boldsymbol{V}(\brho)\brho\big)=\boldsymbol{S}(\brho,\brho)\label{eq:problem}
\end{align}
with the density \(\brho:\OT\rightarrow \R^{2}\), \(\gamma\) signifying an exponential one-sided kernel, and \(\boldsymbol{V}:\R^{2}\rightarrow\R^{2}\) a ``diagonal'' velocity function \(\boldsymbol{S}:\R^{4}\rightarrow\R^{2}\) a ``semi-linear'' right-hand side (for the precise statement see \cref{ass:general} and \cref{eq:nonlocal_system}, \cref{eq:local_system}).
To our knowledge, this represents the first instance of a nonlocal-to-local convergence result for such systems.
Coupling between the equations of the system appears \textit{only} on the right-hand side, which means that some of the well-known methods for transitioning to the local limit remain applicable. As an application, we consider a traffic flow model with two lanes and lane-changing functions. However, our analysis is not limited to a system of two equations; we maintain the two-equation system solely for simplicity.
The approach taken in this paper is as follows: we obtain a uniform Total Variation (\(TV\)) bound of the nonlocal terms as well as a maximum principle. These findings enable us to transition to the limit in the weak formulation. Furthermore, we can demonstrate the entropy admissibility, akin to the scalar case presented in \cite{bressan2021entropy}. 

The paper is organized as follows:  \cref{sec:modelling} presents the model in the nonlocal and local settings. In \cref{sec:well_posedness}, we revisit some well-posedness results, while in \cref{sec:singularl_limit}, we demonstrate how to transition to the limit for $\eta \to 0$. This is accomplished by recovering uniform bounds on the total variation of the nonlocal operators and introducing a compactness argument. \Cref{sec:numerics} is dedicated to numerical simulations that support the analytical results. Lastly, \cref{sec:conclusions} concludes the paper by outlining some remaining problems.

\section{Modeling and fundamental assumptions}\label{sec:modelling}
As mentioned above, our analysis will be limited to two nonlocal scalar balance laws coupled via the right-hand side. This results in a system of nonlocal balance laws that can model lane-changing with macroscopic traffic flow equations.

In this context, it may be helpful to be aware of some classical assumptions related to the involved velocity functions, initial data, etc. We refer the reader to \cref{eq:nonlocal_system} and \cref{eq:local_system}, where the introduced functions were used.

\begin{assumption}[General assumptions regarding the utilized data]\label{ass:general}
The following was assumed:
\begin{description}
    \item[Lane-wise velocities:] \(V_{1},V_{2}\in W^{2,\infty}(\R):\ V_{1}'\leqq 0\geqq V_{2}'\)
    \item[Maximum lane densities:] \(\exists \brho_{\max}\in\R^{2}_{>0}\)
    \item[Initial datum:] \(\brho_{0}\in L^{\infty}\Big(\R;\big[0,\brho_{\max}^{1}\big]\times\big[0,\brho_{\max}^{2}\big]\Big)\cap TV\big(\R;\R^{2}\big)\)
    \item[Nonlocal impact:] \(\eta\in\R_{>0}\)
    \item[RHS, lane changing:]
    \[
S\big(\brho, \cW_\eta[\brho],x\big)= \Big(\tfrac{\brho^{2}}{\brho_{\max}^{2}} - \tfrac{\brho^{1}}{\brho_{\max}^{1}}\Big) H(\cW_\eta[\brho],x),\qquad x\in\R
\]
with \(H\in W^{1,\infty}_{\loc}(\R^{3};\R_{\geq 0})\) such that \(\exists (\mathcal{H},\mathcal{H}_{1},\mathcal{H}_{2}, \mathcal{H}_{BV})\in\R^{4}_{\geq0}:\)
\begin{align*}
\ \|H\|_{L^{\infty}((0,|\brho_{\max}|_{\infty})\times(0,|\brho_{\max}|_{\infty})\times\R)}\leq \mathcal{H}\ &\wedge\ \|\partial_{1}H\|_{L^{\infty}((0,|\brho_{\max}|_{\infty})\times(0,|\brho_{\max}|_{\infty})\times\R)}\leq \mathcal{H}_{1}\\
\wedge\ \|\partial_{2}H\|_{L^{\infty}((0,|\brho_{\max}|_{\infty})\times(0,|\brho_{\max}|_{\infty})\times\R)}\leq \mathcal{H}_{2}\ &\wedge\ 
\|H\|_{L^{\infty}((0,|\brho_{\max}|_{\infty})\times(0,|\brho_{\max}|_{\infty});BV(\R))}\leq \mathcal{H}_{BV(\R)}.
\end{align*}
\end{description}
Thereby, we define \(TV(\R)\coloneqq\{f \in L^1_{loc}(\R): |f|_{TV(\R)}<\infty\}\) and \(BV(\R)\coloneqq\{f \in L^1(\R): |f|_{TV(\R)}<\infty\}\) and the considered space-time horizon \(\OT\coloneqq (0,T)\times\R\) for \(T\in\R_{>0}\).
\end{assumption}
\begin{remark}[Reasonableness of \cref{ass:general}]
The assumption of the velocities being monotonically decreasing is reasonable in traffic flow modeling and one of the main reasons why a maximum principle can hold (see \cref{theo:nonlocal_existence_uniqueness_maximum_principle}).
The canonical assumption that the initial data set is essentially bounded and nonnegative is established. However, one might question the necessity of assuming\(TV\) regularity in addition to these criteria. As we later aim for uniform \(TV\) bounds in the nonlocal term, this assumption is required because particular nonlocal equations do not possess the well-known \(BV\) regularization (for strictly convex/concave flux).
The nonlocal impact represents how far downstream traffic affects the velocity. Because we use an exponential kernel (see \cref{defi:nonlocal_system}), the look-ahead is always infinite, but for \(\eta\) small, it is small and tends to be more localized.
Finally, the R.H.S.\ models the potential lane change from one lane to another. It already encodes the requirement that if one road is empty, density can only come from the other road. In addition, it allows the lane change to be dependent on the location.
In addition, the term \(H\) represents how the density exchange between lanes scales with regard to the density ahead. This can also be interpreted as velocity scaling.
However, this condition can be considered restrictive as it disallows lane-changing on \(\R\) and only permits it in a way that 
\begin{equation}
\|H\|_{L^{\infty}((0,|\brho_{\max}|_{\infty})\times(0,|\brho_{\max}|_{\infty});BV(\R))}\leq \mathcal{H}_{BV(\R)}\label{eq:bullshit}
\end{equation}
holds.
This condition could be removed if we would either assume that the nonlocal kernel \(\gamma\) in \cref{eq:problem} is compactly supported (and not of exponential type like in this contribution (compare \cref{eq:nonlocal_system})) or that the initial datum is in \(L^{1}(\R)\) and not -- as currently assumed -- ``only'' in \(L^{\infty}(\R)\cap TV(\R)\). In both cases, both the total variation estimates in \cref{thm:tv_bound} and the compactness in \cref{thm:compactness} could then be established as well, and \cref{eq:bullshit} would not be required.

In conclusion, one can state that none of the assumptions are restrictive for applications in traffic flow modeling.
\end{remark}

The system of nonlocal balance laws considered in this manuscript can be expressed as follows:
\begin{problem1}[The nonlocal system of balance laws]\label{defi:nonlocal_system}
    Let \cref{ass:general} hold, and consider the ``weakly'' coupled (via the right-hand side) system 
\begin{equation}
\begin{aligned}
    \partial_{t}\brho^{1}(t,x)+\partial_{x}\Big(V_{1}(\cW_\eta[\brho^{1}](t,x))\brho^{1}(t,x)\Big)&=S\big(\brho(t,x),\cW_\eta[\brho](t,x),x\big),&& (t,x)\in(0,T)\times\R\\
    \partial_{t}\brho^{2}(t,x)+\partial_{x}\Big(V_{2}(\cW_\eta[\brho^{2}](t,x))\brho^{2}(t,x)\Big)&=-S\big(\brho(t,x),\cW_\eta[\brho](t,x),x\big),&& (t,x)\in (0,T)\times\R\\
    \brho(0,x)&=\brho_{0}(x),&& x\in\R\\
    \cW_\eta[\rho](t,x)&=\tfrac{1}{\eta}\int_{x}^{\infty}\exp\left(\tfrac{x-y}{\eta}\right)\rho(t,y)\dd y,&& (t,x)\in (0,T)\times\R.
    \end{aligned}
    \label{eq:nonlocal_system}
\end{equation}
Then, we call \(\cW_\eta\) the nonlocal operator , defined for \(\rho\in C\big([0,T];L^{1}_{\loc}(\R)\big)\cap L^{\infty}((0,T);L^{\infty}(\R)),\) and \(\cW_\eta[\brho](t,x)=\big(\cW_\eta[\brho^1],\cW_\eta[\brho^2]\big)(t,x),\ (t,x)\in (0,T)\times\R\) the vector of nonlocal impact. \(\brho=(\brho^1,\brho^2)\) is named vector of solutions of
the \textbf{system of nonlocal balance laws} modeling lane-changing with two lanes.
\end{problem1}

Because we are investigating the singular limit problem for \cref{defi:nonlocal_system}, it becomes necessary to define the corresponding local system. We detail this in the following sections:
\begin{problem2}[The corresponding local system of balance laws]\label{defi:local_system}
Let \cref{ass:general} hold, and we call the ``weakly'' coupled (via the right-hand side) system
\begin{equation}
\begin{aligned}
        \partial_{t}\brho^{1}(t,x)+\partial_{x}\Big(V_{1}(\brho^{1}(t,x))\brho^{1}(t,x)\Big)&=S\big(\brho(t,x),\brho(t,x),x\big),&&(t,x)\in (0,T)\times\R\\
        \partial_{t}\brho^{2}(t,x)+\partial_{x}\Big(V_{2}(\brho^{2}(t,x))\brho^{2}(t,x)\Big)&=-S\big(\brho(t,x),\brho(t,x),x\big),&& (t,x)\in (0,T)\times\R\\
        \brho(0,x)&=\brho_{0}(x),&& x\in\R
        \end{aligned}
    \label{eq:local_system}
    \end{equation}
    the system of local balance laws, which models lane-changing for two lanes.
    \end{problem2}
    Having laid out the underlying assumptions and the dynamics under consideration, we now turn our attention to the well-posedness, i.e., \ the existence and uniqueness of solutions.
\section{Well-posedness of the system of (non)local conservation laws}\label{sec:well_posedness}
To ensure the well-posedness of the local equations, i.e., \ the existence and uniqueness of solutions, we need to first define an Entropy condition. This condition helps identify the physically meaningful solutions among the potentially infinite many weak solutions. Because the system is only weakly coupled via the right-hand side, we can employ scalar entropy conditions similar to those used in~\cite{holden2019models}.
\begin{definition}[Entropy conditions for local conservation laws]\label{defi:entropy}
Let \cref{defi:local_system} be defined for
        \[
 \alpha\in C^{2}(\R) \text{ convex, } \beta_i'\equiv\alpha'\cdot f_i' \text{ where } f_i\equiv (\cdot)V_i(\cdot),\text{ on } \R,\ i\in\{1,2\},\ \text{for } \phi\in C^{1}_{\text{c}}((-42,T)\times\R;\R_{\geq0})
\]
 and for \(\brho^{1},\brho^{2}\in C\big([0,T];L^{1}_{\loc}(\R)\big)\)
\begin{align*}
\mEF_1[\phi,\alpha,\brho^1]&\:\iint_{\OT}\alpha\big(\brho^1(t,x)\big)\phi_{t}(t,x)+\beta_1\big(\brho^1(t,x)\big)\phi_{x}(t,x)\dd x\dd t+\int_{\R}\alpha\big(\brho^1_{0}(x)\big)\phi(0,x)\dd x\\
&\quad-\int_{\OT}\alpha'\big(\brho^1(t,x)\big) S\big(\brho^1(t,x),\brho^2(t,x),\brho^1(t,x),\brho^2(t,x), x\big)\phi(t,x)\dd x\dd t\\
\mEF_2[\phi,\alpha,\brho^2]&\:\iint_{\OT}\alpha\big(\brho^2(t,x)\big)\phi_{t}(t,x)+\beta_2\big(\brho^2(t,x)\big)\phi_{x}(t,x)\dd x\dd t+\int_{\R}\alpha\big(\brho^2_{0}(x)\big)\phi(0,x)\dd x\\
&\quad+\int_{\OT}\alpha'\big(\brho^2(t,x)\big) S\big(\brho^1(t,x),\brho^2(t,x),\brho^1(t,x),\brho^2(t,x), x\big)\phi(t,x)\dd x\dd t.
\end{align*}
Then, \(\brho_{*}\in C\big([0,T];L^{1}_{\loc}(\R;\R^{2})\big)\) is called an entropy solution if it satisfies for \(i\in\{1,2\}\)
\[
\mEF_i[\phi,\alpha,\brho^i_{*}]\geq0\quad  \forall \phi\in C^{1}_{\text{c}}\big((-42,T)\times\R;\R_{\geq0}\big)\ \forall \alpha\in C^{2}(\R) \text{ convex, with }  \beta_i'\equiv\alpha'\cdot f_i'.
\]
\end{definition}
After identifying the appropriate entropy condition, we can establish the existence and uniqueness of the local system as explained in the following:
\begin{theorem}[Existence, Uniqueness \&\ Maximum principle of the local system]
Let \cref{ass:general} hold. Then, there exists a unique, weak, entropy solution \(\brho_{*}\in C\big([0,T];L^{1}_{\loc}(\R;\R^{2})\big)\cap L^{\infty}\big((0,T);L^{\infty}(\R;\R^{2})\big)\) in the sense of \cref{defi:entropy} to \cref{defi:local_system}, such that 
\[
0\leq \brho^i(t,x)\leq  \brho^i_{\max},\ i\in\{1,2\},\ (t,x)\in\OT \text{ a.e.}
\]
with \(\brho^{i}_{\max}\) as in \cref{ass:general}.
\end{theorem}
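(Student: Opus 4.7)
The plan is to exploit the diagonal flux structure of \cref{eq:local_system}---coupling between the lanes happens \emph{only} through the source $S$---so that with one iterate frozen, each equation becomes a scalar inhomogeneous conservation law handled by Kru\v{z}kov's theory. For a short horizon $T_{0}\in(0,T]$ I work on
\[
X\coloneqq\Big\{\brho\in C\big([0,T_{0}];L^{1}_{\loc}(\R;\R^{2})\big):0\leq \brho^{i}(t,x)\leq\brho^{i}_{\max}\text{ a.e.},\ i\in\{1,2\}\Big\},
\]
equipped with the $L^{\infty}((0,T_{0});L^{1}(K))$-topology on compacts $K\subset\R$, and given $\bar\brho\in X$ I define $\mathcal{T}(\bar\brho)=\brho$ componentwise by solving, in the Kru\v{z}kov entropy sense,
\[
\partial_{t}\rho^{i}+\partial_{x}\big(V_{i}(\rho^{i})\rho^{i}\big)+a^{i}(t,x)\,\rho^{i}=b^{i}(t,x),\quad \rho^{i}(0,\cdot)=\brho^{i}_{0},
\]
where the source is split in \emph{positivity-preserving form}: $a^{1}\coloneqq H(\bar\brho,x)/\brho^{1}_{\max}\geq 0$, $b^{1}\coloneqq H(\bar\brho,x)\bar\brho^{2}/\brho^{2}_{\max}\geq 0$, and symmetrically for $i=2$ with the damping term acting on $\rho^{2}$. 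This splitting is the key organizing idea and is exactly where the algebraic structure of $S$ is exploited. Existence/uniqueness for each scalar problem is classical (\cite{holden2019models}) since $a^{i},b^{i}\in L^{\infty}\cap BV_{x}$ with bounds inherited from $\mathcal{H},\mathcal{H}_{BV(\R)}$ and the $TV$-regularity of $\bar\brho$, the datum $\brho_{0}\in L^{\infty}\cap TV$, and $f_{i}(\cdot)=(\cdot)V_{i}(\cdot)$ is locally Lipschitz.

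Then $\mathcal{T}(X)\subset X$ follows from comparison with the constant sub/super-solutions $0$ and $\brho^{i}_{\max}$: since $b^{i}\geq 0$, the value $0$ is a sub-solution; since $a^{i}\brho^{i}_{\max}=H(\bar\brho,x)\geq b^{i}$ (because $\bar\brho^{3-i}\leq\brho^{3-i}_{\max}$), the value $\brho^{i}_{\max}$ is a super-solution. Contractivity of $\mathcal{T}$ follows from the $L^{1}$-stability of scalar entropy solutions with $L^{\infty}$-source and linear absorption: with propagation speed $L\coloneqq\max_{i}\|f_{i}'\|_{L^{\infty}([0,\brho^{i}_{\max}])}$ and $K_{LT_{0}}\coloneqq\{x\in\R:\mathrm{dist}(x,K)\leq LT_{0}\}$,
\[
\sum_{i=1}^{2}\|\mathcal{T}(\bar\brho)^{i}-\mathcal{T}(\tilde\brho)^{i}\|_{L^{\infty}((0,T_{0});L^{1}(K))}\leq T_{0}\,C\,\|\bar\brho-\tilde\brho\|_{L^{\infty}((0,T_{0});L^{1}(K_{LT_{0}};\R^{2}))}
\]
with $C$ depending on $\mathcal{H},\mathcal{H}_{1},\mathcal{H}_{2}$ and $\brho_{\max}$. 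Choosing $T_{0}C<1$ makes $\mathcal{T}$ a strict contraction, so Banach's theorem produces a unique fixed point on $[0,T_{0}]$, and global existence on $[0,T]$ follows by iteration since the a priori box is preserved at each restart.

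Uniqueness on the full interval $[0,T]$ is a consequence of the same $L^{1}$-stability combined with a Gronwall argument, and the entropy inequality of \cref{defi:entropy} is inherited componentwise from the Kru\v{z}kov framework at every iterate and passes to the fixed point (the reaction $\alpha'(\rho^{i})(-a^{i}\rho^{i}+b^{i})$ reassembles into the original $\pm\alpha'(\rho^{i})S$). The step I anticipate as the main obstacle is the interplay between box invariance and the merely $L^{\infty}\cap BV_{x}$ regularity of $a^{i},b^{i}$: the cleanest route is via a Karlsen--Risebro front-tracking or vanishing-viscosity approximation, where comparison at the boundary of the box against the constant sub/super-solutions is elementary, followed by passing to the limit using the $L^{1}$-contractive stability of the approximation.
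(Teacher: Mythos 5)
Your proposal is correct in its essentials, but it is a genuinely different argument from the one in the paper. The paper does not construct the solution at all: it simply invokes the existing theory of weakly coupled hyperbolic systems with source terms (Rohde 1998, with pointers to Hanouzet--Natalini and Holden--Karlsen--Risebro for the $x$-independent case) and obtains the maximum principle from the parabolic approximation used there. You instead give a self-contained Banach fixed-point construction whose key idea --- writing $S$ in the positivity-preserving form $-a^{i}\rho^{i}+b^{i}$ with $a^{i}=H/\brho^{i}_{\max}\geq 0$ and $b^{i}=H\bar\brho^{3-i}/\brho^{3-i}_{\max}\geq 0$ --- makes the invariance of the box $[0,\brho^{1}_{\max}]\times[0,\brho^{2}_{\max}]$ an immediate sub/super-solution comparison rather than a by-product of a viscous limit. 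What your route buys is transparency and a quantitative contraction; what the citation route buys is brevity and coverage of more general couplings. Two small points to tighten: (i) the contraction estimate maps $L^{1}(K_{LT_{0}})$ into $L^{1}(K)$, so Banach's theorem is not literally applicable on the Fr\'echet space $C([0,T_{0}];L^{1}_{\loc})$; either run the fixed point on truncated cones of determinacy or use a weighted global $L^{1}$ norm (the differences of iterates are globally integrable because $H(\cdot,\cdot,\cdot)\in BV(\R)\subset L^{1}(\R)$ in $x$ and $H$ is Lipschitz in the density arguments). (ii) Your set $X$ carries no $TV$ bound, so the claimed $BV_{x}$ regularity of $a^{i},b^{i}$ is not available at a general iterate; fortunately it is also not needed, since the flux $f_{i}(u)=uV_{i}(u)$ is $x$-independent and Kru\v{z}kov's well-posedness and $L^{1}$-stability only require the zeroth-order source to be bounded, measurable in $(t,x)$ and Lipschitz in $u$. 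With those repairs the argument is complete and, at the fixed point, the frozen-coefficient entropy inequality reassembles into exactly the inequality of the paper's entropy definition, as you note.
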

\begin{proof}
The existence and uniqueness of solutions to the local system~\eqref{defi:local_system} can be established using the results presented in~\cite{Rohde1998}. This research examined a class of weakly coupled hyperbolic multi-dimensional systems characterized by source terms dependent on unknowns, as well as spatial and temporal variables. Note that \cite[Assumption 1.1]{Rohde1998} is quite stringent, but the assumption can be relaxed according to the same author. 
For further reference, see the proof presented in~\cite{HanouzetNatalini96, HoldenKarlsenRisebro2003}, where the source term does not depend on the spatial variable. The Maximum principle, which is satisfied in this context, is derived from the parabolic approximation of the hyperbolic system as presented in~\cite{Rohde1998}.
\end{proof}
Next, we define "weak solutions" for the considered class of nonlocal conservation laws in \cref{defi:nonlocal_system}. Because the class of nonlocal conservation laws yields unique, weak solutions, there is no need to define an entropy (which is typically done in local conservation laws and particularly in \cref{defi:entropy}).
\begin{definition}[Weak solution for the system of nonlocal conservation laws]
For a system of nonlocal conservation laws as in \cref{eq:nonlocal_system} we call \((\brho^{1},\brho^{2})\in C\big([0,T];L^{1}_{\text{loc}}(\R;\R^{2})\big)\cap L^{\infty}\big((0,T);L^{\infty}(\R;\R^{2})\big)\) a weak solution to \cref{defi:nonlocal_system}, if for all \(\phi\in C^{1}_{\text{c}}\big((-42,T)\times\R\big)\) and for \(i\in\{1,2\}\) the following holds:
\begin{gather*}
    \iint_{\OT}\brho^{i}(t,x)\big(\phi_{t}(t,x)+V_{i}(\cW_\eta[\brho^{i}](t,x))\phi_{x}(t,x)\big)\dd x\dd t+\int_{\R}\phi(0,x)\brho^{i}_{0}(x)\dd x\\
    =(-1)^{i}\iint_{\OT} \phi(t,x)S\big(\brho(t,x),\cW_\eta[\brho](t,x), x\big)\dd x\dd t
\end{gather*}
and it is complemented by the nonlocal operator:
\[
\cW_\eta[\brho^{i}](t,x)\coloneqq \tfrac{1}{\eta}\int_{x}^{\infty}\exp\big(\tfrac{x-y}{\eta}\big)\brho^{i}(t,y)\dd y,\ (t,x)\in\OT,\ i\in\{1,2\}.
\]

\end{definition}
In the next theorem, we will establish the existence and uniqueness of solutions for the nonlocal balance law, as in \cref{defi:nonlocal_system}:
\begin{theorem}[Existence\ \&\ Uniqueness \&\ Maximum principle]\label{theo:nonlocal_existence_uniqueness_maximum_principle}
Let \cref{ass:general} be true. Then, there exists a unique weak \(\brho\in C\big([0,T];L^{1}_{\loc}(\R;\R^{2})\big)\cap L^{\infty}\big((0,T);L^{\infty}(\R;\R^{2})\cap TV(\R;\R^{2})\big)\) of \cref{eq:nonlocal_system} and the solution satisfies
\begin{align*}
    0\leq \brho^{i}(t,x)\leq \brho^{i}_{\max}\ (t,x)\in\OT \text{ a.e.},\ i\in\{1,2\}.
\end{align*}
   \end{theorem}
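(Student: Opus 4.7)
The plan is to set up a Banach fixed-point problem for $\brho=(\brho^1,\brho^2)$, adapting the characteristics-based approach of \cite{keimer1,pflug} for scalar nonlocal conservation laws to the weakly coupled system. The regularizing property $\cW_\eta\colon L^\infty \to W^{1,\infty}$, with operator norm $\propto 1/\eta$, is the workhorse that makes the scalar arguments carry over, because it renders the characteristic field globally Lipschitz.

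For $T^*\in(0,T]$ to be chosen, I work in the closed set
\begin{equation*}
\mathcal K \: \big\{\brho\in C([0,T^*];L^1_{\loc}(\R;\R^2))\cap L^\infty : \ 0\le\brho^i(t,x)\le\brho^i_{\max},\ i\in\{1,2\}\big\},
\end{equation*}
and for each $\bar\brho\in\mathcal K$ I solve, by freezing the nonlocal terms, the semi-linear system
\begin{equation*}
\partial_t\brho^i + \partial_x\big(V_i(\cW_\eta[\bar\brho^i])\brho^i\big) = (-1)^{i+1}S(\brho,\cW_\eta[\bar\brho],x),\qquad \brho^i(0,\cdot)=\brho^i_0,\ i\in\{1,2\},
\end{equation*}
by integrating along the two families of characteristics $\dot\xi_i=V_i(\cW_\eta[\bar\brho^i](\cdot,\xi_i))$ and using a Duhamel formula for the ODE-coupled source. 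The resulting map $\mathcal F\colon\bar\brho\mapsto\brho$ is a contraction on $C([0,T^*];L^1(K;\R^2))$ for each compact $K\subset\R$ once $T^*$ is small, using standard characteristic-stability estimates together with the Lipschitzianity of $V_i$ and $H$ on the invariant rectangle. Iteration in time extends the unique local-in-time fixed point to the whole interval $[0,T]$.

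The maximum principle is then read off the structure of $S$. At $\brho^i=\brho^i_{\max}$ the identity $\eta\,\partial_x\cW_\eta[\brho^i]=\cW_\eta[\brho^i]-\brho^i\le 0$ (obtained by direct differentiation of the exponential kernel), together with $V_i'\le 0$ from \cref{ass:general} and $(-1)^{i+1}S\le 0$ (since $\brho^j/\brho^j_{\max}-1\le 0$ and $H\ge 0$), gives $\partial_t\brho^i\le 0$ in the smooth case; symmetrically, $\brho^i=0$ gives $\partial_t\brho^i\ge 0$ because there $(-1)^{i+1}S=(\brho^j/\brho^j_{\max})H\ge 0$. For weak solutions this is made rigorous either by vanishing-viscosity regularization (as in \cite{Rohde1998}) or by truncating $S$ outside the rectangle so that $\mathcal F$ preserves $\mathcal K$. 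The $TV$-bound is obtained by differentiating the characteristic representation in $x$ and invoking Gronwall; here the $BV$-in-$x$ assumption on $H$ from \cref{ass:general} is precisely what controls the source contribution to $|\brho(t,\cdot)|_{TV}$. The main obstacle is not closing the contraction itself -- the regularizing $\cW_\eta$ makes this standard -- but tracking the $\eta$-dependence of all constants sharply, since the same estimates must be revisited in \cref{sec:singularl_limit} with the stronger requirement of being \emph{uniform} in $\eta$; at the present well-posedness level, $\eta$ is fixed and everything closes without further difficulty.
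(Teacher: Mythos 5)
Your proposal is correct and follows essentially the same route as the paper, which does not prove this statement directly but delegates it to \cite[Theorems 2.15, 3.3 and Lemma 3.4]{KeimerMultilane2022}: there, too, well-posedness is obtained by freezing the nonlocal terms, solving the resulting semilinear transport system along characteristics (using that \(\cW_\eta\) maps \(L^{\infty}\) into \(W^{1,\infty}\) with norm of order \(1/\eta\)), closing a Banach fixed point on a short time horizon, and extending globally via the invariant region \(0\leq\brho^{i}\leq\brho^{i}_{\max}\), whose verification is exactly your sign analysis of \(\partial_{x}\cW_\eta[\brho^{i}]=\tfrac{1}{\eta}\big(\cW_\eta[\brho^{i}]-\brho^{i}\big)\) together with the structure of \(S\). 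The only point of your sketch that needs repair is the choice of contraction norm: since the exponential kernel looks infinitely far downstream, estimating \(\cW_\eta[\bar\brho_{1}]-\cW_\eta[\bar\brho_{2}]\) on a compact \(K\) requires control of \(\bar\brho_{1}-\bar\brho_{2}\) on all of \([\inf K,\infty)\), so the fixed point cannot be run on \(C([0,T^{*}];L^{1}(K))\) for each compact separately; one contracts instead in the \(L^{\infty}\) norm (or an exponentially weighted \(L^{1}\) norm), as in the cited reference.
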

\begin{proof}
    This is a consequence of \cite[Theorem 2.15]{KeimerMultilane2022} for a small time horizon, and, thanks to the maximum principle in \cite[Theorem 3.3\ \& Lemma 3.4]{KeimerMultilane2022}, it can be extended to any finite time horizon.
\end{proof}

Another important result in this work is the stability of solutions in \(L^{1}\) and that we can approximate solutions using sufficiently smooth solutions.
\begin{lemma}[Continuous dependence of nonlocal solutions to the initial datum and smooth solutions]\label{lemma:stability}
Let the assumptions of \cref{theo:nonlocal_existence_uniqueness_maximum_principle}  be given, and assume that for \(\eps\in\R_{>0}\) the functions \(\phi_{\eps}^{1}\in C^{\infty}_{\text{c}}(\R;\R_{\geq0})\) and \(\phi_{\eps}^{2}\in C^{\infty}_{\text{c}}(\R^{2};\R_{\geq0})\) denote the standard mollifier in the sense of \cite[Remark C.18]{leoni}. We define
\[
\brho_{0,\eps}\equiv \phi_{\eps}^{1}\ast \brho_{0},\ H_{\eps}=\phi_{\eps}^{2}\ast H
\]
and call \(\brho_{\eps}\in C([0,T];L^{1}_{\loc}(\R;\R^{2}))\cap L^{\infty}((0,T);TV(\R;\R^{2}))\) the solution to the corresponding nonlocal conservation law with the initial datum \(\brho_{\eps}\) and lane-changing function \(H_{\eps}\). Then, \(\brho_{\eps}\in W^{1,\infty}_{\text{loc}}(\OT)\) and we obtain
\[
\lim_{\eps\rightarrow 0} \|\brho_{\eps}-\brho\|_{C([0,T];L^{1}(\R;\R^{2}))}=0.
\]
In particular, \(\brho_{\eps}\) is a strong solution of \cref{defi:nonlocal_system} and the nonlocal operator admits additional regularity, i.e.\ 
\[
W_{\eta}[\brho_{\eta}]\in W^{2,\infty}_{\text{loc}}(\OT;\R^{2}).
\]
\end{lemma}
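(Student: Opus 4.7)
The plan is to obtain each of the four statements separately, leveraging the existence and uniqueness theorem (\cref{theo:nonlocal_existence_uniqueness_maximum_principle}) applied to the mollified data, a characteristic-based regularity argument, and an $L^{1}$-stability estimate with respect to both initial datum and lane-changing function.

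\textbf{Step 1 (Existence of $\brho_{\eps}$).} Since $\phi_{\eps}^{1}$ and $\phi_{\eps}^{2}$ are standard mollifiers, $\brho_{0,\eps}\in C^{\infty}(\R;\R^{2})$ with $\|\brho_{0,\eps}\|_{L^{\infty}}\leq \|\brho_{0}\|_{L^{\infty}}$ and $|\brho_{0,\eps}|_{TV}\leq |\brho_{0}|_{TV}$; analogously, $H_{\eps}$ inherits the same $L^{\infty}$ and $BV$ bounds as $H$, while being smooth. Hence \cref{ass:general} holds uniformly in $\eps$ for the mollified data, so \cref{theo:nonlocal_existence_uniqueness_maximum_principle} provides a unique solution $\brho_{\eps}\in C([0,T];L^{1}_{\loc}(\R;\R^{2}))\cap L^{\infty}((0,T);L^{\infty}(\R;\R^{2})\cap TV(\R;\R^{2}))$ with bounds independent of $\eps$.

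\textbf{Step 2 (Lipschitz regularity $\brho_{\eps}\in W^{1,\infty}_{\loc}(\OT)$).} Exploit the method of characteristics used in \cite{pflug, KeimerMultilane2022}: along the characteristic flow $\xi^{i}_{t,x}$ associated with $V_{i}(\cW_{\eta}[\brho_{\eps}^{i}])$, the solution satisfies an ODE whose right-hand side is now smooth in the relevant variables (the nonlocal operator gains a derivative via the exponential kernel, $V_{i}$ is $W^{2,\infty}$, $H_{\eps}$ is $C^{\infty}$ and $\brho_{0,\eps}$ is smooth). A bootstrap/Grönwall argument on the spatial derivatives of $\xi^{i}_{t,x}$ and on $\partial_{x}\brho_{\eps}$ along characteristics yields a local $L^{\infty}$ bound on $\partial_{x}\brho_{\eps}$; using the PDE itself, $\partial_{t}\brho_{\eps}$ is then also locally bounded.

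\textbf{Step 3 ($L^{1}$ convergence to $\brho$).} Write the equations satisfied by $\brho$ and $\brho_{\eps}$ and compare them. Use the $L^{1}$ continuous dependence of solutions of the nonlocal system on the initial datum and on the right-hand side (obtained as in \cite[Sec.~3]{KeimerMultilane2022} by a Grönwall estimate combining the Lipschitz properties of $V_{i}$, the smoothing of $\cW_{\eta}$, and the Lipschitz-in-arguments structure of $S$). This gives
\[
\|\brho_{\eps}-\brho\|_{C([0,T];L^{1}(\R;\R^{2}))}\leq C(T,\eta)\Big(\|\brho_{0,\eps}-\brho_{0}\|_{L^{1}(\R;\R^{2})}+\|H_{\eps}-H\|_{L^{1}_{\loc}}\Big),
\]
and both terms on the right vanish as $\eps\to 0$ by standard properties of mollification (since $\brho_{0}\in L^{\infty}\cap TV\subset L^{1}_{\loc}$ and $H$ is $BV$ in $x$, locally Lipschitz in its other arguments).

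\textbf{Step 4 (Strong solution and $W^{2,\infty}_{\loc}$-regularity of $\cW_{\eta}[\brho_{\eps}]$).} Once $\brho_{\eps}\in W^{1,\infty}_{\loc}(\OT)$, the PDE in \cref{eq:nonlocal_system} is satisfied pointwise a.e., i.e.~$\brho_{\eps}$ is a strong solution. Moreover, from the defining formula
\[
\cW_{\eta}[\brho_{\eps}^{i}](t,x)=\tfrac{1}{\eta}\int_{x}^{\infty}\exp\bigl(\tfrac{x-y}{\eta}\bigr)\brho_{\eps}^{i}(t,y)\,\dd y,
\]
one computes $\partial_{x}\cW_{\eta}[\brho_{\eps}^{i}]=\tfrac{1}{\eta}\bigl(\cW_{\eta}[\brho_{\eps}^{i}]-\brho_{\eps}^{i}\bigr)$ and $\partial_{t}\cW_{\eta}[\brho_{\eps}^{i}]=\cW_{\eta}[\partial_{t}\brho_{\eps}^{i}]$, so $\cW_{\eta}[\brho_{\eps}]\in W^{1,\infty}_{\loc}$; differentiating once more and using $\brho_{\eps}\in W^{1,\infty}_{\loc}$ gives $\cW_{\eta}[\brho_{\eps}]\in W^{2,\infty}_{\loc}(\OT;\R^{2})$.

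\textbf{Main obstacle.} The only genuinely delicate step is Step 2: producing uniform local-Lipschitz bounds on $\brho_{\eps}$ requires simultaneously controlling the characteristic flow, its spatial derivative, and the propagation of $\partial_{x}\brho_{\eps}$ under the coupled nonlocal RHS. This is where the smoothness of $\brho_{0,\eps}$ and $H_{\eps}$ is essential and where the exponential kernel's built-in smoothing is used to close the Grönwall loop.
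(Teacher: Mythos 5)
Your proposal is correct and follows essentially the same route as the paper, which itself only sketches the argument by noting that the nonlocal operator renders the velocity field Lipschitz-continuous and then invoking classical approximation results for conservation laws with Lipschitz velocity together with Gr\"onwall estimates, deferring the details to \cite{KeimerMultilane2022} and \cite{keimer2021discontinuous}. Your Steps 1--4 (uniform bounds for the mollified data, characteristics plus bootstrap for $W^{1,\infty}_{\loc}$ regularity, $L^{1}$-stability in the datum and in $H$ via Gr\"onwall, and direct differentiation of the kernel formula for the $W^{2,\infty}_{\loc}$ claim) are precisely the ingredients those references supply.
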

\begin{proof}
The proof mainly shows that the nonlocal operator renders the velocity field of the conservation laws Lipschitz-continuous. Subsequently, one can apply classical approximation results for linear conservation laws with regard to the velocity field as well as some Gronwall estimates. We refer the reader to 
\cite{KeimerMultilane2022} and to
\cite{keimer2021discontinuous}.
\end{proof}

We also require a technical lemma, which we detail in the following:
\begin{lemma}[{\(\partial_{2}\cW_\eta[\brho^{i}]\)} vanishing at \(\infty\)]\label{lem:W_vanishing_infty}
    It holds for \(i\in\{1,2\}\) that the spatial derivative of the nonlocal term, as in \cref{eq:nonlocal_system}, vanishes at \(\infty\), i.e., \(\forall\eta\in\R_{>0},\ i\in\{1,2\}\) 
    \[
\lim_{x\rightarrow\infty} \partial_{x}\cW_\eta[\brho^{i}](t,x)=0\qquad \forall t\in[0,T].
    \]
\end{lemma}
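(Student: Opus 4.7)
My plan is to reduce the claim to a purely scalar observation by first expressing the spatial derivative of the nonlocal operator as a simple algebraic combination of $\cW_\eta[\brho^i]$ and $\brho^i$. Applying Leibniz' rule to the integral defining $\cW_\eta$ (justified rigorously on the smooth approximations $\brho_\eps$ from \cref{lemma:stability} and then passing to the limit via the $L^1$-stability provided there) yields the identity
\begin{equation*}
\partial_x \cW_\eta[\brho^i](t,x) \;=\; \tfrac{1}{\eta}\Bigl(\cW_\eta[\brho^i](t,x) - \brho^i(t,x)\Bigr).
\end{equation*}
Granting this, the claim reduces to showing that $\cW_\eta[\brho^i](t,\cdot)$ and $\brho^i(t,\cdot)$ admit the \emph{same} pointwise limit as $x\to\infty$.

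For the limit of $\brho^i(t,\cdot)$, I would invoke the $TV(\R)$-regularity granted by \cref{theo:nonlocal_existence_uniqueness_maximum_principle}: upon passing to the right-continuous representative, the Jordan decomposition writes $\brho^i(t,\cdot)$ as the difference of two bounded, monotone nondecreasing functions. Each such function admits a finite limit at $+\infty$, and hence so does $\brho^i(t,\cdot)$; call this limit $L_t$.

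For the limit of $\cW_\eta[\brho^i](t,\cdot)$, I would perform the change of variables $z=(y-x)/\eta$ to rewrite
\begin{equation*}
\cW_\eta[\brho^i](t,x)=\int_0^\infty e^{-z}\,\brho^i(t,x+\eta z)\,\dd z.
\end{equation*}
Since $0\leq \brho^i(t,x+\eta z)\leq \brho^i_{\max}$ uniformly by the maximum principle, and $\brho^i(t,x+\eta z)\to L_t$ as $x\to\infty$ for every fixed $z\geq 0$, dominated convergence yields $\cW_\eta[\brho^i](t,x)\to L_t$. Inserting both limits into the identity above produces $\partial_x\cW_\eta[\brho^i](t,x)\to 0$.

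The main technical nuisance I anticipate is the measure-theoretic matching between the pointwise identity for $\partial_x\cW_\eta[\brho^i]$ (which a priori holds only a.e.\ in $x$) and the pointwise limit statement for $\brho^i$; fixing once and for all the right-continuous representative of the $TV$-function $\brho^i(t,\cdot)$ should render both statements compatible and make the conclusion fully rigorous.
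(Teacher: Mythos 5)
Your proposal is correct, but it diverges from the paper's argument after the shared first step. Both proofs start from the identity $\partial_x\cW_\eta[\brho^i](t,x)=\tfrac{1}{\eta}\big(\cW_\eta[\brho^i](t,x)-\brho^i(t,x)\big)$. The paper then integrates by parts once more to write $\cW_\eta[\brho^i](t,x)-\brho^i(t,x)=\int_x^\infty \e^{(x-y)/\eta}\partial_y\brho^i_\eps(t,y)\dd y$ on the smooth approximations of \cref{lemma:stability}, and bounds the result by $\tfrac1\eta\,|\brho^i_\eps(t,\cdot)|_{TV(x,\infty)}$, i.e.\ by the \emph{tail} of the total variation, which vanishes as $x\to\infty$; the conclusion is then transferred to the non-smoothed solution. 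You instead show that $\brho^i(t,\cdot)$ and $\cW_\eta[\brho^i](t,\cdot)$ have the \emph{same} limit $L_t$ at $+\infty$ — the former via the Jordan decomposition of a $TV$ function, the latter via the substitution $z=(y-x)/\eta$ and dominated convergence — so the difference tends to zero. Both arguments ultimately rest on the finiteness of $|\brho^i(t,\cdot)|_{TV(\R)}$. The paper's version is quantitative (it yields the explicit decay rate $\tfrac1\eta|\brho^i(t,\cdot)|_{TV(x,\infty)}$, which is reused implicitly elsewhere), whereas yours is purely qualitative but avoids the somewhat delicate final step of passing the tail-$TV$ estimate from $\brho^i_\eps$ back to $\brho^i$: you only need the smoothing to justify the Leibniz identity, after which everything is phrased directly for the $BV$ solution. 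Your closing remark about fixing the right-continuous representative is exactly the right way to reconcile the a.e.\ identity for $\partial_x\cW_\eta[\brho^i]$ with the pointwise limit of $\brho^i(t,\cdot)$; since both one-sided limits of a $TV$ function converge to $L_t$ at $+\infty$, the choice of representative is immaterial for the limit itself.
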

\begin{proof}
Thanks to \cref{lemma:stability} we can assume that the nonlocal solution's initial datum is smooth, with a smoothing parameter \(\eps\in\R_{>0}\) so that the corresponding solution for \(i\in\{1,2\}\) \(\brho^{i}_{\eps}\in W^{1,\infty}(\OT)\) represents a robust solution.
Next, we can compute the derivative of the nonlocal operator and have for \((t,x)\in\OT\)
\begin{align*}
    \big|\partial_{x}\cW_\eta[\brho^{i}_{\eps}](t,x)\big|&=\tfrac{1}{\eta}\big|\cW_\eta[\brho^{i}_{\eps}](t,x)-\brho^{i}_{\eps}(t,x)\big|=\tfrac{1}{\eta}\bigg|\int_{x}^{\infty}\e^{\frac{x-y}{\eta}}\partial_{y}\brho^{i}_{\eps}(t,y)\dd y\bigg|\\
    &\leq \tfrac{1}{\eta}\int_{x}^{\infty}\e^{\frac{x-y}{\eta}}\big|\partial_{y}\brho^{i}_{\eps}(t,y)\big|\dd y\leq \tfrac{1}{\eta}\int_{x}^{\infty}|\partial_{y}\brho^{i}_{\eps}(t,y)|\dd y=\tfrac{1}{\eta}|\brho^{i}_{\eps}(t,\cdot)|_{TV(x,\infty)}.
\end{align*}
For \(x\rightarrow\infty\), the right-hand side vanishes, and thus, we obtain our claim for every \(\eps\in\R_{>0}\) as well as for the non-smoothed solution.
\end{proof}
Equipped with the well-posedness and approximation results, we can now turn to tackle the singular limit problem.
\section{The singular limit problem or nonlocal approximation of local lane-change traffic models}\label{sec:singularl_limit}
In this section, we first establish an equation solely in the nonlocal operator (similar to the approach in \cite{coclite2020general}), see \cref{lem:nonlocal_transport_equation}. This will allow us, to prove a total variation bound uniform in \(\eta\) using \cref{thm:tv_bound}. We then demonstrate that whenever a nonlocal balance law converges strongly in \(C(L^{1})\), it converges to the entropy solution (\cref{thm:entropy_admissibility}). \Cref{thm:compactness}, along with the uniform \(TV\) estimate, contributes to obtained "spatial compactness", which results in time compactness as well and leads to strong convergence in \(C(L^{1})\). Eventually, in \cref{theo:singular_limit_problem}, we collect the previously established results and obtain the singular limit convergence to the (local) entropy solution.
\subsection{Total Variation bounds uniform with respect to the nonlocal terms}
We start by formulating a Cauchy problem entirely in nonlocal terms. This approach has the advantage that the properties of the solutions \(\brho^{i}\) do not need to be studied anymore, only the properties of \(\cW[\brho]\), which turn out to behave better (one can obtain uniform \(TV\) estimates later in \cref{thm:tv_bound}).

\begin{lemma}[System of transport equations with nonlocal sources satisfied by the nonlocal operator]\label{lem:nonlocal_transport_equation}
The nonlocal terms \(\cW[\brho^{i}],\ i\in\{1,2\}\) of the system dynamics in 
\eqref{eq:nonlocal_system} are satisfied upon introducing the following abbreviations for \((t,x)\in\OT\) and \(i\in\{1,2\}\)
\begin{align}
\bW^i_{\eta}(t,x)&\coloneqq \cW[\brho^{i}](t,x),\\
\bW_{\eta}(t,x)&\coloneqq\big(\bW^1_{\eta}, \bW^2_{\eta}\big)(t,x),\\
\mathscr{S}\big(\bW_\eta,\eta\partial_{2}\bW_\eta,\cdot\big)&\coloneqq S\big(\bW^1_{\eta}-\eta \partial_{2}\bW^1_{\eta},\bW^2_{\eta}-\eta \partial_{2}\bW^2_{\eta},\bW^1_{\eta},\bW^2_{\eta}, \cdot\big),\label{eq:abbreviation_S}
\end{align}
 the coupled Cauchy problem:
\begin{equation}
\begin{aligned} \label{eq:W_system}
\partial_{t}\bW^1_{\eta}(t,x)&=-V_{1}(\bW^1_{\eta}(t,x))\partial_{x}\bW^1_{\eta}(t,x)-\tfrac{1}{\eta}\int_{x}^{\infty}\!\!\!\!\!\exp(\tfrac{x-y}{\eta})V_{1}'(\bW^1_{\eta}(t,y))\bW^1_{\eta}(t,y)\partial_{y}\bW^1_{\eta}(t,y)\dd y\\
        &\quad+\tfrac{1}{\eta}\int_{x}^{\infty}\!\!\!\!\!\exp(\tfrac{x-y}{\eta})\mathscr{S}\big(\bW_\eta(t,y),\eta\partial_{y}\bW_\eta(t,y),y\big)\dd y,\\
        \partial_{t}\bW^2_{\eta}(t,x)&=-V_{2}(\bW^2_{\eta}(t,x))\partial_{x}\bW^2_{\eta}(t,x)-\tfrac{1}{\eta}\int_{x}^{\infty}\!\!\!\!\!\exp(\tfrac{x-y}{\eta})V_{2}'(\bW^2_{\eta}(t,y))\bW^2_{\eta}(t,y)\partial_{y}\bW^2_{\eta}(t,y)\dd y\\
        &\quad-\tfrac{1}{\eta}\int_{x}^{\infty}\!\!\!\!\!\exp(\tfrac{x-y}{\eta})\mathscr{S}\big(\bW_\eta(t,y),\eta\partial_{y}\bW_\eta(t,y),y\big)\dd y,
\end{aligned}        
        \end{equation}
        which is supplemented by the following initial conditions:
        \begin{equation}
\big(\bW^1_\eta(0,x),\bW^2_\eta(0,x)\big)=\tfrac{1}{\eta}\left(\int_x^\infty\exp(\tfrac{x-y}{\eta})\brho^1_0(y) \dd y,\int_x^\infty\exp(\tfrac{x-y}{\eta})\brho_0^2(y) \dd y\right),\qquad x\in\R.\label{eq:lem:nonlocal_transport_equation_initial_datum}
        \end{equation}
\end{lemma}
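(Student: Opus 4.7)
The plan is to derive the transport system \eqref{eq:W_system} by differentiating the definition of $\bW_\eta^i$ in time, substituting the PDE from \cref{eq:nonlocal_system}, and then rewriting every occurrence of $\brho^i$ in terms of $\bW_\eta^i$ via the fundamental identity
\[
\partial_{x}\bW_\eta^{i}(t,x)=\tfrac{1}{\eta}\bigl(\bW_\eta^{i}(t,x)-\brho^{i}(t,x)\bigr)\quad\Longleftrightarrow\quad \brho^{i}=\bW_\eta^{i}-\eta\,\partial_{x}\bW_\eta^{i},
\]
which follows by differentiating the exponential nonlocal operator directly.  To make the subsequent manipulations rigorous (time differentiation under the integral sign, integration by parts with respect to $y$), I first invoke \cref{lemma:stability} and work with the mollified data $\brho_{0,\eps}$, $H_\eps$, for which the strong solution $\brho_\eps$ and the associated $W_\eta[\brho_\eps]$ enjoy $W^{1,\infty}_{\loc}$ and $W^{2,\infty}_{\loc}$ regularity respectively.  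The identity will then pass to the limit $\eps\to 0$ in $C([0,T];L^{1}_{\loc})$.

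Once smoothness is assumed, the first step is to compute, say for $i=1$,
\[
\partial_{t}\bW_\eta^{1}(t,x)=\tfrac{1}{\eta}\int_{x}^{\infty}\!\!\e^{\frac{x-y}{\eta}}\partial_{t}\brho^{1}(t,y)\dd y,
\]
and then replace $\partial_{t}\brho^{1}$ using the PDE.  The source contribution is immediate: substituting $\brho^{i}=\bW_\eta^{i}-\eta\partial_{y}\bW_\eta^{i}$ into $S$ reproduces exactly the abbreviation $\mathscr{S}\big(\bW_\eta,\eta\partial_{y}\bW_\eta,y\big)$ defined in \eqref{eq:abbreviation_S}, giving one of the desired integrals.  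The flux contribution $-\tfrac{1}{\eta}\int_{x}^{\infty}\e^{(x-y)/\eta}\partial_{y}\bigl(V_{1}(\bW_\eta^{1})\brho^{1}\bigr)\dd y$ is the real work: I expand the $y$-derivative with the product/chain rule and, after substituting $\brho^{1}=\bW_\eta^{1}-\eta\partial_{y}\bW_\eta^{1}$, perform one integration by parts on the term containing $\partial_{y}^{2}\bW_\eta^{1}$.  The boundary term at $y=x$ produces the local transport term $-V_{1}(\bW_\eta^{1}(t,x))\partial_{x}\bW_\eta^{1}(t,x)$; the boundary term at $y=\infty$ vanishes thanks to \cref{lem:W_vanishing_infty} combined with the boundedness of $V_{1}$ and the decay of $\e^{(x-y)/\eta}$.

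A short algebraic reorganization then merges the two remaining integrals, using $\eta\,\partial_{y}\bW_\eta^{1}+\brho^{1}=\bW_\eta^{1}$, into the single nonlocal flux term $-\tfrac{1}{\eta}\int_{x}^{\infty}\e^{(x-y)/\eta}V_{1}'(\bW_\eta^{1})\bW_\eta^{1}\partial_{y}\bW_\eta^{1}\dd y$ that appears in \eqref{eq:W_system}.  The equation for $i=2$ is obtained identically, with the sign change of the source inherited directly from \cref{eq:nonlocal_system}.  Finally, the initial condition \eqref{eq:lem:nonlocal_transport_equation_initial_datum} is nothing but the definition of $\cW_\eta[\brho^{i}]$ evaluated at $t=0$ with $\brho^{i}(0,\cdot)=\brho_{0}^{i}$; passing $\eps\to 0$ via the $C([0,T];L^{1}_{\loc})$ convergence of $\brho_\eps\to\brho$ and the induced convergence of $\bW_\eta[\brho_\eps]$ recovers the claim for the original (non-smoothed) solution.

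The main technical obstacle is the justification of the integration by parts at the upper endpoint $y=\infty$: the boundary term involves $V_{1}(\bW_\eta^{1})\partial_{y}\bW_\eta^{1}$ weighted by $\e^{(x-y)/\eta}$, and while the exponential decay alone suffices to annihilate bounded quantities, controlling $\partial_{y}\bW_\eta^{1}$ as $y\to\infty$ for a merely $L^{\infty}\cap TV$ solution is exactly the content of \cref{lem:W_vanishing_infty}.  Beyond this point the argument is a careful but routine chain of identities, and the stability lemma ensures every step is first carried out in the smooth category and then transferred to the $L^{1}_{\loc}$-continuous setting.
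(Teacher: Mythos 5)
Your proposal is correct and follows essentially the same route as the paper's proof: regularize via \cref{lemma:stability}, differentiate $\bW_\eta^i$ in time, substitute the PDE, use the identity $\brho^i=\bW_\eta^i-\eta\partial_x\bW_\eta^i$, and integrate by parts so that the boundary term at $y=x$ yields the local transport part and the remaining integrals collapse to the $V_i'\bW_\eta^i\partial_y\bW_\eta^i$ term. The only (immaterial) difference is the order of operations — you expand the $y$-derivative before substituting and integrate by parts once on the $\partial_y^2\bW_\eta^i$ term, whereas the paper integrates by parts first and then substitutes — and for the mollified solution the boundary term at $y=\infty$ already dies from the exponential weight and boundedness of $\partial_y\bW_\eta^i$, so \cref{lem:W_vanishing_infty} is not strictly needed there.
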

\begin{proof} 
We take advantage of \cref{lemma:stability} and assume first that the initial datum is smooth enough to obtain strong solutions (we suppress the additional dependency on the regularization parameter). Then, we can compute the partial derivative with respect to \(x\) of \(\cW\), and we obtain for \((t,x)\in\OT\) and \(i\in\{1,2\}\)
\begin{equation}
\partial_{x}\bW^i_{\eta}(t,x)=\tfrac{1}{\eta}\big(\bW^{i}_{\eta}(t,x)-\brho^{i}(t,x)\big)\implies \brho^{i}(t,x)=\bW^{i}_{\eta}(t,x)-\eta\partial_{x}\bW^{i}_{\eta}(t,x).\label{eq:identity_nonlocal}
\end{equation}
        Then, we can compute the time derivative of \(\bW^{1}_{\eta}\) (and analogously, also \(\bW^{2}_{\eta}\)) and obtain
 \begin{align*}
                \partial_{t}\bW^{1}_{\eta}(t,x)&\overset{\eqref{eq:nonlocal_system}}{=}-\tfrac{1}{\eta}\int_{x}^{\infty}\exp(\tfrac{x-y}{\eta})\partial_{y}\Big(V_{1}(\bW^{1}_{\eta}(t,y))\brho^{1}(t,y)\Big)\dd y\\
        &\qquad+\tfrac{1}{\eta}\int_{x}^{\infty}\exp(\tfrac{x-y}{\eta})S\big(\brho^{1}(t,y),\brho^{2}(t,y),\bW^1_{\eta}(t,y), \bW^2_{\eta}(t,y),y\big)\dd y
        \intertext{and using partial integration}
        &=-\tfrac{1}{\eta^{2}}\int_{x}^{\infty}\exp(\tfrac{x-y}{\eta})V_{1}(\bW^{1}_{\eta}(t,y))\brho^{1}(t,y)\dd y+\tfrac{1}{\eta}V_{1}(\bW^{1}_{\eta}(t,x))\brho^{1}(t,x)\\
        &\qquad+\tfrac{1}{\eta}\int_{x}^{\infty}\exp(\tfrac{x-y}{\eta})S\big(\brho^{1}(t,y),\brho^{2}(t,y),\bW^{1}_{\eta}(t,y),\bW^{2}_{\eta}(t,y),y\big)\dd y
        \intertext{after inserting \cref{eq:identity_nonlocal} for \(\brho^{1}\) and \(\brho^{2},\) and using the notation in \cref{eq:abbreviation_S}we obtain} 
        &=-\tfrac{1}{\eta^{2}}\int_{x}^{\infty}\exp(\tfrac{x-y}{\eta})V_{1}(\bW^{1}_{\eta}(t,y))\bW^{1}_{\eta}(t,y)\dd y\\
        &\qquad+\tfrac{1}{\eta}\int_{x}^{\infty}\exp(\tfrac{x-y}{\eta})V_{1}(\bW^{1}_{\eta}(t,y))\partial_{y}\bW^{1}_{\eta}(t,y)\dd y\\
        &\qquad +\tfrac{1}{\eta}V_{1}(\bW^{1}_{\eta}(t,x))\bW^1_{\eta}(t,x)-V_{1}(\bW^{1}_{\eta}(t,x))\partial_{x}\bW^{1}_{\eta}](t,x)\\
        &\qquad+\tfrac{1}{\eta}\int_{x}^{\infty}\exp(\tfrac{x-y}{\eta})\mathscr{S}\big(\bW_{\eta}(t,y),\eta\partial_{y}\bW_{\eta}(t,y),y\big)\dd y
        \intertext{another integration by parts in the second term yields}
        &=-\tfrac{1}{\eta}\int_{x}^{\infty}\exp(\tfrac{x-y}{\eta})V_{1}'(\bW^{1}_{\eta}(t,y))\bW^{1}_{\eta}(t,y)\partial_{y}\bW^{1}_{\eta}(t,y)\dd y\\
        &\qquad -V_{1}(\bW^{1}_{\eta}(t,x))\partial_{x}\bW^{1}_{\eta}(t,x)+\tfrac{1}{\eta}\int_{x}^{\infty}\exp(\tfrac{x-y}{\eta})\mathscr{S}\big(\bW_{\eta}(t,y),\eta\partial_{y}\bW_{\eta}(t,y),y\big)\dd y.
        \end{align*}
        Repeating the same argument for \(\bW^{2}_{\eta}\) yields the claim for the robust solutions, i.e., in particular, for the smooth initial datum. However, thanks to \cref{lemma:stability}, this holds also for the general datum, which concludes the proof.
    \end{proof}
    \begin{remark}[Reasonableness of the nonlocal dynamics]
The system in \cref{eq:W_system} is for \(i\in\{1,2\}\) and \((t,x)\in\OT\) indeed a nonlocal approximation of
\begin{align*}
\partial_{t}\brho^{i}(t,x)&=-V_{i}(\brho^{i}(t,x))\partial_{x}\brho^{i}(t,x)-V_{i}'(\brho^{i}(t,x))\brho^{i}(t,x)\partial_{x}\brho^{i}(t,x)\\
&\quad +(-1)^{i+1}S\big(\brho^{1}(t,x),\brho^{2}(t,x),\brho^{1}(t,x),\brho^{2}(t,x),x\big)\\
&=\partial_{x}\big(V_{i}(\brho^{i}(t,x))\brho^{i}(t,x)\big) +(-1)^{i+1}S\big(\brho^{1}(t,x),\brho^{2}(t,x),\brho^{1}(t,x),\brho^{2}(t,x),x\big)
\end{align*}
which can be easily observed for \(\eta\rightarrow 0\).
    \end{remark}
    Following the same method of proof as in \cite{coclite2020general}, the formulation of the nonlocal terms in \cref{lem:nonlocal_transport_equation} makes it possible to derive total variation estimates directly, which are uniform in the nonlocal parameter \(\eta\).
    \begin{theorem}[Total variation bound uniform in \(\eta\)] \label{thm:tv_bound}
    Given \cref{ass:general}, the solution \(\bW_\eta\coloneqq\big(\bW^1_\eta,\,\bW^2_\eta\big)\) to the system in \cref{eq:W_system} with the initial datum, as in \cref{eq:lem:nonlocal_transport_equation_initial_datum}, satisfies the following total variation bound  \(\forall t\in[0,T]\)
    \begin{equation}
    \begin{aligned}
    \big|\bW_\eta(t,\cdot)\big|_{TV(\R;\R^{2})}&\leq \bigg(|\bq_{0}|_{TV(\R;\R^{2})}+4\Big(\tfrac{\|\brho_{\max}\|_{\infty}}{\brho_{\max}^{2}}+\tfrac{\|\brho_{\max}\|_{\infty}}{\brho_{\max}^{1}}+1 \Big) \mathcal{H}_{BV}\bigg)\\
    &\ \cdot\exp\bigg(2t \Big(\tfrac{\|\brho_{\max}\|_{\infty}\mathcal{H}_{1}}{\brho_{\max}^{2}}\!+\!\tfrac{\mathcal{H}}{\brho_{\max}^{1}}\!+\!\tfrac{\|\brho_{\max}\|_{\infty}\mathcal{H}_{1}}{\brho_{\max}^{1}}\!+\!2\mathcal{H}_{1}\!+\!\tfrac{\|\brho_{\max}\|_{\infty}\mathcal{H}_{1}}{\brho_{\max}^{1}}\!+\!\tfrac{\mathcal{H}}{\brho_{\max}^{2}}\!+\!\tfrac{\|\brho_{\max}\|_{\infty}\mathcal{H}_{1}}{\brho_{\max}^{2}}\Big)\bigg)
    \end{aligned}
\end{equation}
with the constants involved in the estimate as shown in \cref{ass:general}.
    \end{theorem}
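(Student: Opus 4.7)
The plan is to follow the approach of \cite{coclite2020general}, differentiating the nonlocal equations in \cref{eq:W_system} with respect to the spatial variable, then estimating $|\partial_x \bW^i_\eta(t,\cdot)|_{L^1(\R)}$ via a Gronwall argument that is uniform in $\eta$. First, by \cref{lemma:stability}, I would assume throughout that the initial datum is smooth so that $\bW_\eta \in W^{2,\infty}_{\loc}(\OT;\R^2)$ and everything below can be justified pointwise; the bound for the rough datum then follows by lower-semicontinuity of the total variation. The initial TV of $\bW_\eta(0,\cdot)$ is controlled by $|\brho_0|_{TV(\R;\R^2)}$ because convolution with the exponential probability density $\tfrac{1}{\eta}\exp(\tfrac{\cdot-y}{\eta})\mathds{1}_{\{y>x\}}$ is $L^1$-nonexpansive on TV.

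Next, for each $i\in\{1,2\}$ I would differentiate \cref{eq:W_system} in $x$ to obtain an evolution equation for $u_i \coloneqq \partial_x \bW^i_\eta$. The transport term $-V_i(\bW^i_\eta)\partial_x\bW^i_\eta$ produces, after differentiating, $-V_i(\bW^i_\eta)\partial_x u_i - V_i'(\bW^i_\eta) u_i^2$. The two nonlocal integrals, upon differentiating in $x$, each yield a boundary contribution at $y=x$ (from Leibniz's rule, producing an algebraic term) plus the same integral divided by $-\eta$. Multiplying by $\sgn(u_i(t,x))$ and integrating over $\R$, the term $-V_i(\bW^i_\eta)\partial_x u_i \cdot \sgn(u_i)$ becomes $-V_i(\bW^i_\eta)\partial_x |u_i|$, which integrates to zero after an integration by parts (boundary terms vanish by \cref{lem:W_vanishing_infty} and the fact that smooth $\bW_\eta$ has rapid decay of $\partial_x\bW_\eta$ at infinity; the interior piece $\int V_i'(\bW^i_\eta)\partial_x\bW^i_\eta |u_i|\,dx$ can be combined with the quadratic $V_i'u_i^2$ term). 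For the nonlocal pieces, the key observation (as in \cite{coclite2020general}) is Young's inequality for convolutions with the $L^1$-normalized kernel $\tfrac{1}{\eta}\e^{(x-y)/\eta}\mathds{1}_{\{y>x\}}$: any integrand of the form $f(\bW_\eta,\partial_y\bW_\eta,y)$ that is Lipschitz in its arguments with bounds independent of $\eta$ will produce, after the sign-integration, a term bounded by $\|f\|$-type constants times $|u|_{L^1}$ plus the BV norm of the $y$-inhomogeneity.

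The main technical obstacle is the source contribution $\tfrac{1}{\eta}\int_x^\infty \e^{(x-y)/\eta} \mathscr{S}(\bW_\eta,\eta\partial_y\bW_\eta,y)\,dy$: after differentiating in $x$ and applying the sign trick, one must estimate $\partial_y \mathscr{S}$ using $\mathcal{H}_1$, $\mathcal{H}_2$, and $\mathcal{H}_{BV}$. Writing $S = (\tfrac{\brho^2}{\brho_{\max}^2}-\tfrac{\brho^1}{\brho_{\max}^1})H$ with $\brho^i = \bW^i_\eta - \eta\partial_x\bW^i_\eta$, the $y$-derivative yields one piece linear in $u_i$ (from differentiating the prefactor, bounded by $\tfrac{\mathcal{H}}{\brho_{\max}^i}$), one piece from $\partial_1 H, \partial_2 H$ acting on $\partial_y\bW_\eta$ (bounded by $\|\brho_{\max}\|_\infty \mathcal{H}_1$-type constants times $|u|$), one piece involving $\eta \partial_y^2 \bW^i_\eta$ which must be absorbed by an $\eta\|\partial_2 H\|$ factor that stays bounded as $\eta\to 0$, and finally a BV contribution from $\partial_3 H$ which produces the constant $\mathcal{H}_{BV}$ term outside the exponential. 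The cross-lane coupling yields the same structure after swapping indices, which explains the symmetric sum inside the exponential factor in the claimed bound.

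Collecting all the contributions gives a differential inequality
\begin{equation*}
\tfrac{d}{dt}|\bW_\eta(t,\cdot)|_{TV(\R;\R^2)} \leq C_1 |\bW_\eta(t,\cdot)|_{TV(\R;\R^2)} + C_2,
\end{equation*}
where $C_1$ is precisely the sum appearing in the exponential of the theorem statement (doubled to account for both equations) and $C_2$ is proportional to $\mathcal{H}_{BV}(\tfrac{\|\brho_{\max}\|_\infty}{\brho_{\max}^1}+\tfrac{\|\brho_{\max}\|_\infty}{\brho_{\max}^2}+1)$. Gronwall's inequality and the initial TV bound then yield the claim. The uniformity in $\eta$ is automatic because every constant above depends only on the data from \cref{ass:general}; crucially the pieces where $\eta$ could blow up (the $\tfrac{1}{\eta}$ factors) are always paired with an integration against $\tfrac{1}{\eta}\e^{(x-y)/\eta}$, which is a probability density, so Young's convolution inequality cancels the singular factor.
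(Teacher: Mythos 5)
Your overall architecture coincides with the paper's: smooth the datum via \cref{lemma:stability}, differentiate the $\bW$-system in $x$, multiply by $\sgn$ of the derivative, integrate over $\R$, and close with Gronwall; your treatment of the local transport part (the cancellation of $\int V_i'(\bW^i_\eta)u_i|u_i|$ against $-\int\sgn(u_i)V_i'(\bW^i_\eta)u_i^2$, writing $u_i\coloneqq\partial_x\bW^i_\eta$) and of the initial total variation is also correct. The genuine gap is in your stated mechanism for the nonlocal convection term. After differentiating and sign-integrating, this term contributes
\begin{equation*}
-\tfrac{1}{\eta^{2}}\int_{\R}\sgn(u_i(t,x))\int_{x}^{\infty}\e^{\frac{x-y}{\eta}}V_i'(\bW^i_\eta(t,y))\bW^i_\eta(t,y)\,u_i(t,y)\dd y\dd x
\;+\;\tfrac{1}{\eta}\int_{\R}\sgn(u_i(t,x))V_i'(\bW^i_\eta(t,x))\bW^i_\eta(t,x)\,u_i(t,x)\dd x .
\end{equation*}
Young's inequality with the probability density $\tfrac1\eta\e^{(x-y)/\eta}\,$ cancels only \emph{one} of the two factors $\tfrac1\eta$ in the first integral and leaves a bound of order $\tfrac1\eta\|V_i'\|_\infty\|\bW^i_\eta\|_\infty\big|\bW^i_\eta(t,\cdot)\big|_{TV(\R)}$, which blows up as $\eta\to0$; so your closing claim that ``Young's convolution inequality cancels the singular factor'' fails exactly for the term that carries the whole difficulty. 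What the paper does (following \cite{coclite2020general}) is exchange the order of integration in the first term and invoke the sign structure $V_i'\leq 0$ together with $\bW^i_\eta\geq 0$ (maximum principle) to bound it from above by $-\tfrac{1}{\eta}\int_\R V_i'(\bW^i_\eta)\bW^i_\eta|u_i|$, which then cancels \emph{exactly} against the Leibniz boundary term above. Without this sign-based cancellation the Gronwall coefficient is not uniform in $\eta$ and the theorem is not obtained.

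The same ``difference structure'' is essential for the source: the contributions $\tfrac{1}{\eta^{2}}\,\mathscr{S}\ast\exp_{\eta}$ and $-\tfrac{1}{\eta}\mathscr{S}$ must first be combined, by an integration by parts against the exponential, into $\tfrac{1}{\eta}\big(\tfrac{\dd}{\dd y}\mathscr{S}\big)\ast\exp_{\eta}$ before any Young-type estimate is applied (estimating either piece separately again costs an uncancelled $\tfrac1\eta$). You implicitly perform this step when you say one must estimate $\partial_y\mathscr{S}$, and your accounting of the resulting $\mathcal{H}$, $\mathcal{H}_1$, $\mathcal{H}_{BV}$ contributions --- including the additional integration by parts for the $\eta\partial_y^2\bW^i_\eta$ piece, whose boundary term requires \cref{lem:W_vanishing_infty}, and the uniform bound $\eta\|\partial_x\bW^i_\eta\|_{L^\infty}\leq 2\|\brho_{\max}\|_\infty$ from \cref{eq:identity_nonlocal} --- is consistent with the paper. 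Hence only the convection step needs to be repaired, but it is the crux of the uniformity in $\eta$.
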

    \begin{proof}
  Let us first assume that our initial datum is smooth, which is, thanks to \cref{lemma:stability}, not a restriction. Recalling the identities in \(\bW\) in \cref{lem:nonlocal_transport_equation} as well as the notation in \cref{eq:abbreviation_S},
  we compute at first the spatial derivative of $\partial_{t}\bW^1_{\eta}(t,x)$ and $\partial_{t}\bW^2_{\eta}(t,x)$ for \((t,x)\in\OT\) and arrive at
      \begin{equation}
      \begin{aligned}
          \partial_{t}\partial_{x}\bW^1_{\eta}(t,x)&=-\tfrac{1}{\eta^2}\!\!\!\int_{x}^{\infty}\!\!\!\!\!\!\exp(\tfrac{x-y}{\eta})V_{1}'(\bW^1_{\eta}(t,y))\bW^1_{\eta}(t,y)\partial_{y}\bW^1_{\eta}(t,y)\dd y+\tfrac{1}{\eta}V_{1}'(\bW^1_{\eta}(t,x))\bW^1_{\eta}(t,x)\partial_{x}\bW^1_{\eta}(t,x)\\
          &\qquad-\tfrac{1}{\eta}V_{1}'(\bW^1_{\eta}(t,x))\big(\partial_{x}\bW^1_{\eta}(t,x)\big)^{2}-\tfrac{1}{\eta}V_{1}(\bW^1_{\eta}(t,x))\partial_{x}^{2}\bW^1_{\eta}(t,x)\\
        &\qquad+\tfrac{1}{\eta^2}\int_x^\infty\exp(\tfrac{x-y}{\eta})\mathscr{S}\big(\bW(t,y),\eta\partial_{y}\bW(t,y),y\big)\dd y -\tfrac{1}{\eta}\mathscr{S}\big(\bW(t,x),\eta\partial_{x}\bW(t,x),x\big)\\
        \partial_{t}\partial_{x}\bW^2_{\eta}(t,x)&=-\tfrac{1}{\eta^2}\!\!\!\int_{x}^{\infty}\!\!\!\!\!\!\exp(\tfrac{x-y}{\eta})V_{2}'(\bW^2_{\eta}(t,y))\bW^2_{\eta}(t,y)\partial_{y}\bW^2_{\eta}(t,y)\dd y+\tfrac{1}{\eta}V_{2}'(\bW^2_{\eta}(t,x))\bW^2_{\eta}(t,x)\partial_{x}\bW^2_{\eta}(t,x)\\
          &\qquad-\tfrac{1}{\eta}V_{2}'(\bW^2_{\eta}(t,x))\big(\partial_{x}\bW^2_{\eta}(t,x)\big)^{2}-\tfrac{1}{\eta}V_{2}(\bW^2_{\eta}(t,x))\partial_{x}^{2}\bW^2_{\eta}(t,x)\\
        &\qquad-\tfrac{1}{\eta^2}\int_x^\infty\exp(\tfrac{x-y}{\eta})\mathscr{S}\big(\bW(t,y),\eta\partial_{y}\bW(t,y),y\big) \dd y+\tfrac{1}{\eta}\mathscr{S}\big(\bW(t,x),\eta\partial_{x}\bW(t,x),x\big).
      \end{aligned}
      \label{eq:partial_x_partial_t_W}
      \end{equation}
      Next, we compute the total variation of \(\cW[\brho]\), i.e.,\ \(|\bW^1_{\eta}(t,\cdot)|_{TV(\R)}+ |\bW^2_{\eta}(t,\cdot)|_{TV(\R)}\) starting with \(|\bW^1_{\eta}(t,\cdot)|_{TV(\R)}\) 
\begin{align}
\notag
    &\tfrac{\dd}{\dd t}\int_{\R}|\partial_{x}\bW^1_{\eta}(t,x)|\dd x=\int_{\R}\sgn(\partial_{x}\bW^1_{\eta}(t,x))\partial_{t}\partial_{x}\bW^1_{\eta}(t,x)\dd x\notag\\
    &\overset{\eqref{eq:partial_x_partial_t_W}}{=}-\tfrac{1}{\eta^2}\int_\R \sgn(\partial_{x}\bW^1_{\eta}(t,x)) \int_{x}^{\infty}\exp(\tfrac{x-y}{\eta})V_{1}'(\bW^1_{\eta}(t,y))\bW^1_{\eta}(t,y)\partial_{y}\bW^1_{\eta}(t,y)\dd y \dd x\\
    \notag
    &\quad +\tfrac{1}{\eta}\int_{\R}\sgn(\partial_{x}\bW^1_{\eta}(t,x))V_{1}'(\bW^1_{\eta}(t,x))\bW^1_{\eta}(t,x)\partial_{x}\bW^1_{\eta}(t,x)\dd x \\
    \notag
    &\quad -\tfrac{1}{\eta}\int_{\R}\sgn(\partial_{x}\bW^1_{\eta}(t,x)) V_{1}'(\bW^1_{\eta}(t,x))\big(\partial_{x}\bW^1_{\eta}(t,x)\big)^{2}\dd x\\
    \notag
    &\quad -\tfrac{1}{\eta}\int_{\R}\sgn(\partial_{x}\bW^1_{\eta}(t,x))V_{1}(\bW^1_{\eta}(t,x))\partial_{x}^{2}\bW^1_{\eta}(t,x)\dd x\\
    \notag
    &\quad +\tfrac{1}{\eta^2} \int_{\R}\sgn(\partial_{x}\bW^1_{\eta}(t,x)) \int_x^\infty\exp(\tfrac{x-y}{\eta})\mathscr{S}\big(\bW(t,y),\eta\partial_{y}\bW(t,y),y\big) \dd y\dd x\\
    \notag
    &\quad -\tfrac{1}{\eta}\int_{\R}\sgn(\partial_{x}\bW^1_{\eta}(t,x))\mathscr{S}\big(\bW(t,x),\eta\partial_{x}\bW(t,x),x\big)\dd x.
    \intertext{Performing an integration by parts in the fourth term, using \(\sgn(\partial_{x}\bW^1_{\eta}(t,x))\partial_{x}^{2}\bW_{1}(t,x)=\tfrac{\dd}{\dd x}|\partial_{x}\bW^1_{\eta}(t,x)|\)}
    &=-\tfrac{1}{\eta^2}\int_\R \sgn(\partial_{x}\bW^1_{\eta}(t,x)) \int_{x}^{\infty}\exp(\tfrac{x-y}{\eta})V_{1}'(\bW^1_{\eta}(t,y))\bW^1_{\eta}(t,y)\partial_{y}\bW^1_{\eta}(t,y)\dd y \dd x\\
    \notag
    &\quad +\tfrac{1}{\eta}\int_{\R}|\partial_{x}\bW^1_{\eta}(t,x)|V_{1}'(\bW^1_{\eta}(t,x))\bW^1_{\eta}(t,x)\dd x \\
    \notag 
    &\quad +\tfrac{1}{\eta^2} \int_{\R}\sgn(\partial_{x}\bW^1_{\eta}(t,x)) \int_x^\infty\exp(\tfrac{x-y}{\eta})\mathscr{S}\big(\bW(t,y),\eta\partial_{x}\bW(t,y),y\big) \dd y\dd x\\
    \notag
    &\quad -\tfrac{1}{\eta}\int_{\R}\sgn(\partial_{x}\bW^1_{\eta}(t,x))\mathscr{S}\big(\bW(t,x),\eta\partial_{x}\bW(t,x),x\big)\dd x
    \intertext{and exchanging the order of integration}
    &\leq-\tfrac{1}{\eta^2}\int_\R V_{1}'(\bW^1_{\eta}(t,y))\bW^1_{\eta}(t,y)|\partial_{y}\bW^1_{\eta}(t,y)|\int_{-\infty}^{y}\exp(\tfrac{x-y}{\eta})\dd x \dd y\\
    \notag
    &\quad +\tfrac{1}{\eta}\int_{\R}|\partial_{x}\bW^1_{\eta}(t,x)|V_{1}'(\bW^1_{\eta}(t,x))\bW^1_{\eta}(t,x)\dd x \\
    \notag
    &\quad +\tfrac{1}{\eta^2} \int_{\R}\sgn(\partial_{x}\bW^1_{\eta}(t,x)) \int_x^\infty\exp(\tfrac{x-y}{\eta})\mathscr{S}\big(\bW(t,y),\eta\partial_{y}\bW(t,y),y\big) \dd y\dd x\\
    \notag
    &\quad -\tfrac{1}{\eta}\int_{\R}\sgn(\partial_{x}\bW^1_{\eta}(t,x))\mathscr{S}\big(\bW(t,x),\eta\partial_{x}\bW(t,x),x\big)\dd x\\
    \notag
    &=\tfrac{1}{\eta^2} \int_{\R}\sgn(\partial_{x}\bW^1_{\eta}(t,x)) \int_x^\infty\exp(\tfrac{x-y}{\eta})\mathscr{S}\big(\bW(t,y),\eta\partial_{y}\bW(t,y),y\big) \dd y\dd x\\
    \notag
    &\quad -\tfrac{1}{\eta}\int_{\R}\sgn(\partial_{x}\bW^1_{\eta}(t,x))\mathscr{S}\big(\bW(t,x),\eta\partial_{x}\bW(t,x),x\big)\dd x
    \notag
    \intertext{also, an integration by parts in the first term with regard to the exponential function yields}
    &=\tfrac{1}{\eta} \int_{\R}\sgn(\partial_{x}\bW^1_{\eta}(t,x)) \int_x^\infty\exp(\tfrac{x-y}{\eta})\tfrac{\dd}{\dd y}\mathscr{S}\big(\bW(t,y),\eta\partial_{y}\bW(t,y),y\big) \dd y\dd x.\label{eq:step_in_TV_estimate}
\end{align}
We still need to investigate the spatial derivative of the source term \(\mathscr{S}\) in greater detail. Recalling its definition in \cref{eq:abbreviation_S} and \cref{ass:general},
 we can compute for \((t,y)\in\OT\) as follows:
\begin{align*}
&\tfrac{\dd}{\dd y}\mathscr{S}\big(\bW(t,y),\eta\partial_{y}\bW(t,y),y\big) \\
&=\tfrac{\dd}{\dd y}S\big(\bW^1_{\eta}(t,y)-\eta \partial_{2}\bW^1_{\eta}(t,y),\bW^2_{\eta}(t,y)-\eta \partial_{2}\bW^2_{\eta}(t,y),\bW^1_{\eta}(t,y),\bW^2_{\eta}(t,y), y\big)\\
&=\tfrac{\dd}{\dd y}\bigg(\Big(\tfrac{\bW^2_{\eta}(t,y)-\eta \partial_{2}\bW^2_{\eta}(t,y)}{\brho^{2}_{\max}}- \tfrac{\bW^1_{\eta}(t,y)-\eta \partial_{2}\bW^1_{\eta}(t,y)}{\brho^{1}_{\max}}\Big)H\big(\bW^{1}_{\eta}(t,y),\bW^{2}_{\eta}(t,y),y\big)\bigg)\\
&=\Big(\tfrac{\partial_{2}\bW^2_{\eta}(t,y)-\eta \partial_{2}^{2}\bW^2_{\eta}(t,y)}{\brho^{2}_{\max}}- \tfrac{\partial_{2}\bW^1_{\eta}(t,y)-\eta \partial_{2}^{2}\bW^1_{\eta}(t,y)}{\brho^{1}_{\max}}\Big)H\big(\bW^{1}_{\eta}(t,y),\bW^{2}_{\eta}(t,y),y\big)\\
&\quad +\Big(\tfrac{\bW^2_{\eta}(t,y)-\eta \partial_{2}\bW^2_{\eta}(t,y)}{\brho^{2}_{\max}}- \tfrac{\bW^1_{\eta}(t,y)-\eta \partial_{2}\bW^1_{\eta}(t,y)}{\brho^{1}_{\max}}\Big)\cdot \Big(\partial_1 H\big(\bW^{1}_{\eta}(t,y),\bW^{2}_{\eta}(t,y),y\big)\partial_{2}\bW^{1}_{\eta}(t,y)\\
&\qquad\qquad\qquad+\partial_{2}H\big(\bW^{1}_{\eta}(t,y),\bW^{2}_{\eta}(t,y),y\big)\partial_{2}\bW^{2}_{\eta}(t,y)+\partial_{3}H\big(\bW^{1}_{\eta}(t,y),\bW^{2}_{\eta}(t,y),y\big)\Big).
\end{align*}
Because \(\tfrac{\dd}{\dd y}\mathscr{S}\) involves higher order derivatives of \(\bW\), integration by parts is necessary, and we continue our estimate in \cref{eq:step_in_TV_estimate} by changing the order of integration to arrive at:
\begin{align*}
\notag
    \eqref{eq:step_in_TV_estimate}\leq &\tfrac{1}{\eta\brho_{\max}^{2}}\int_{\R}\partial_{2}\bW^{2}_{\eta}(t,y)H\big(\bW^{1}_{\eta}(t,y),\bW^{2}_{\eta}(t,y),y\big) \int_{-\infty}^{y}\sgn(\partial_{x}\bW^{1}_{\eta}(t,x))\exp\big(\tfrac{x-y}{\eta}\big)\dd x\dd y\\
    &\quad -\tfrac{1}{\brho_{\max}^{2}}\int_{\R}\partial_{2}^{2}\bW^{2}_{\eta}(t,y)H\big(\bW^{1}_{\eta}(t,y),\bW^{2}_{\eta}(t,y),y\big) \int_{-\infty}^{y}\sgn(\partial_{x}\bW^{1}_{\eta}(t,x))\exp\big(\tfrac{x-y}{\eta}\big)\dd x\dd y\\
    &\quad -\tfrac{1}{\eta\brho_{\max}^{1}}\int_{\R}\partial_{2}\bW^{1}_{\eta}(t,y)H\big(\bW^{1}_{\eta}(t,y),\bW^{2}_{\eta}(t,y),y\big) \int_{-\infty}^{y}\sgn(\partial_{x}\bW^{1}_{\eta}(t,x))\exp\big(\tfrac{x-y}{\eta}\big)\dd x\dd y\\
    &\quad +\tfrac{1}{\brho_{\max}^{1}}\int_{\R}\partial_{2}^{2}\bW^{1}_{\eta}(t,y)H\big(\bW^{1}_{\eta}(t,y),\bW^{2}_{\eta}(t,y),y\big) \int_{-\infty}^{y}\sgn(\partial_{x}\bW^{1}_{\eta}(t,x))\exp\big(\tfrac{x-y}{\eta}\big)\dd x\dd y\\
    &\quad +\tfrac{2}{\eta}\|\partial_{1}H\|_{L^{\infty}((0,|\brho_{\max}|_{\infty})\times(0,|\brho_{\max}|_{\infty})\times\R)}\int_{\R}\big|\partial_{2}\bW^{1}_{\eta}(t,y)\big| \int_{-\infty}^{y}\exp\big(\tfrac{x-y}{\eta}\big)\dd x\dd y\\
    &\quad +\tfrac{2}{\eta}\|\partial_{2}H\|_{L^{\infty}((0,|\brho_{\max}|_{\infty})\times(0,|\brho_{\max}|_{\infty})\times\R)}\int_{\R}\big|\partial_{2}\bW^{2}_{\eta}(t,y)\big| \int_{-\infty}^{y}\exp\big(\tfrac{x-y}{\eta}\big)\dd x\dd y\\
    &\quad +\tfrac{1}{\eta}\|H\|_{L^{\infty}((0,|\brho_{\max}|_{\infty})\times(0,|\brho_{\max}|_{\infty});TV(\R))}\sup_{y\in\R}\int_{-\infty}^{y}\exp\big(\tfrac{x-y}{\eta}\big)\dd x\dd y.
    \intertext{An integration by parts in the terms involving \(\partial_{2}^{2}\bW_{\eta}^{i},\ i\in\{1,2\}\) and subsequent straightforward computations yield}
    &\leq\|H\|_{L^{\infty}((0,|\brho_{\max}|_{\infty})\times(0,|\brho_{\max}|_{\infty})\times\R)}\tfrac{1}{\eta\brho_{\max}^{2}}\int_{\R}|\partial_{2}\bW^{2}_{\eta}(t,y)|\int_{-\infty}^{y}\exp\big(\tfrac{x-y}{\eta}\big)\dd x\dd y\\
    &\quad -\tfrac{1}{\brho_{\max}^{2}}\lim_{y\rightarrow\infty}\partial_{2}\bW^{2}_{\eta}(t,y)H\big(\bW^{1}_{\eta}(t,y),\bW^{2}_{\eta}(t,y),y\big) \int_{-\infty}^{y}\sgn(\partial_{x}\bW^{1}_{\eta}(t,x))\exp\big(\tfrac{x-y}{\eta}\big)\dd x\dd y\\
    &\quad +\tfrac{1}{\brho_{\max}^{2}}\int_{\R}\partial_{2}\bW^{2}_{\eta}(t,y)H\big(\bW^{1}_{\eta}(t,y),\bW^{2}_{\eta}(t,y),y\big)\sgn(\partial_{y}\bW^{1}_{\eta}(t,y))\dd y\\
   &\quad+ \tfrac{1}{\brho_{\max}^{2}}\int_{\R}\partial_{2}\bW^{2}_{\eta}(t,y)\tfrac{\dd}{\dd y}H\big(\bW^{1}_{\eta}(t,y),\bW^{2}_{\eta}(t,y),y\big) \int_{-\infty}^{y}\sgn(\partial_{x}\bW^{1}_{\eta}(t,x))\exp\big(\tfrac{x-y}{\eta}\big)\dd x\dd y\\
    &\quad +\tfrac{1}{\eta\brho_{\max}^{1}}\|H\|_{L^{\infty}((0,|\brho_{\max}|_{\infty})\times(0,|\brho_{\max}|_{\infty})\times\R)}\int_{\R}|\partial_{2}\bW^{1}_{\eta}(t,y)|\int_{-\infty}^{y}\exp\big(\tfrac{x-y}{\eta}\big)\dd x\dd y\\
    &\quad +\tfrac{1}{\brho_{\max}^{1}}\lim_{y\rightarrow\infty}\partial_{2}\bW^{1}_{\eta}(t,y)H\big(\bW^{1}_{\eta}(t,y),\bW^{2}_{\eta}(t,y),y\big) \int_{-\infty}^{y}\sgn(\partial_{x}\bW^{1}_{\eta}(t,x))\exp\big(\tfrac{x-y}{\eta}\big)\dd x\dd y\\
    &\quad -\tfrac{1}{\brho_{\max}^{1}}\int_{\R}\partial_{2}\bW^{1}_{\eta}(t,y)H\big(\bW^{1}_{\eta}(t,y),\bW^{2}_{\eta}(t,y),y\big) \sgn(\partial_{y}\bW^{1}_{\eta}(t,y))\dd y\\
    &\quad -\tfrac{1}{\brho_{\max}^{1}}\int_{\R}\partial_{2}\bW^{1}_{\eta}(t,y)\tfrac{\dd}{\dd y}H\big(\bW^{1}_{\eta}(t,y),\bW^{2}_{\eta}(t,y),y\big) \int_{-\infty}^{y}\sgn(\partial_{x}\bW^{1}_{\eta}(t,x))\exp\big(\tfrac{x-y}{\eta}\big)\dd x\dd y\\
    &\quad +2\|\partial_{1}H\|_{L^{\infty}((0,|\brho_{\max}|_{\infty})\times(0,|\brho_{\max}|_{\infty})\times\R)}\big|\bW^{1}_{\eta}(t,\cdot)\big|_{TV(\R)}\\
    &\quad +2\|\partial_{2}H\|_{L^{\infty}((0,|\brho_{\max}|_{\infty})\times(0,|\brho_{\max}|_{\infty})\times\R)}\big|\bW^{2}_{\eta}(t,\cdot)\big|_{TV(\R)}\\
    &\quad +\|H\|_{L^{\infty}((0,|\brho_{\max}|_{\infty})\times(0,|\brho_{\max}|_{\infty});TV(\R))}
    \intertext{applying \cref{lem:W_vanishing_infty}, i.e.,\ \(\lim_{y\rightarrow\infty}\partial_{2}\bW_{\eta}^{i}t,y)=0,\ \forall t\in[0,T],\ i\in\{1,2\}\) and recalling the postulated bounds on \(H\) in \cref{ass:general}} 
     &\leq2\tfrac{\mathcal{H}}{\brho_{\max}^{2}}|\bW^{2}_{\eta}(t,\cdot)|_{TV(\R)} + \tfrac{\eta}{\brho_{\max}^{2}}\int_{\R}\big|\partial_{2}\bW^{2}_{\eta}(t,y)\big|\big|\tfrac{\dd}{\dd y}H\big(\bW^{1}_{\eta}(t,y),\bW^{2}_{\eta}(t,y),y\big)\big|\dd y\\
    &\quad+2\tfrac{\mathcal{H}}{\brho_{\max}^{1}}\big|\bW^{1}_{\eta}(t,\cdot)\big|_{TV(\R)} +\tfrac{\eta}{\brho_{\max}^{1}}\int_{\R}\big|\partial_{2}\bW^{1}_{\eta}(t,y)\big|\big|\tfrac{\dd}{\dd y}H\big(\bW^{1}_{\eta}(t,y),\bW^{2}_{\eta}(t,y),y\big)\big|\dd y\\
    &\quad +2\mathcal{H}_{1}\big|\bW^{1}_{\eta}(t,\cdot)\big|_{TV(\R)}+2\mathcal{H}_{2}\big|\bW^{2}_{\eta}(t,\cdot)\big|_{TV(\R)} +\mathcal{H}_{BV}
    \intertext{and taking advantage of \cref{eq:identity_nonlocal}, and in particular \(\eta\partial_{2}\bW^{i}(t,x)=\bW_{\eta}^{i}(t,x)-\brho^{i}(t,x)\ \implies \eta\|\partial_{2}\bW^{i}(t,\cdot)\|_{L^{\infty}(\R)}\leq 2\|\bq_{\max}\|_{\infty}\ \forall (t,x)\in\OT\)}
    &\leq2\tfrac{\mathcal{H}}{\brho_{\max}^{2}}|\bW^{2}_{\eta}(t,\cdot)|_{TV(\R)} + 2\tfrac{\|\brho_{\max}\|_{\infty}}{\brho_{\max}^{2}}\int_{\R}\big|\tfrac{\dd}{\dd y}H\big(\bW^{1}_{\eta}(t,y),\bW^{2}_{\eta}(t,y),y\big)\big|\dd y\\
    &\quad+2\tfrac{\mathcal{H}}{\brho_{\max}^{1}}\big|\bW^{1}_{\eta}(t,\cdot)\big|_{TV(\R)} +2\tfrac{\|\brho_{\max}\|_{\infty}}{\brho_{\max}^{1}}\int_{\R}\big|\tfrac{\dd}{\dd y}H\big(\bW^{1}_{\eta}(t,y),\bW^{2}_{\eta}(t,y),y\big)\big|\dd y\\
    &\quad +2\mathcal{H}_{1}\big|\bW^{1}_{\eta}(t,\cdot)\big|_{TV(\R)}+2\mathcal{H}_{2}\big|\bW^{2}_{\eta}(t,\cdot)\big|_{TV(\R)} +\mathcal{H}_{BV}\\
    &\leq 2\tfrac{\mathcal{H}}{\brho_{\max}^{2}}|\bW^{2}_{\eta}(t,\cdot)|_{TV(\R)} + 2\tfrac{\|\brho_{\max}\|_{\infty}}{\brho_{\max}^{2}}\big(\mathcal{H}_{1}\big|\bW^{1}_{\eta}(t,\cdot)\big|_{TV(\R)}+\mathcal{H}_{2}\big|\bW^{2}_{\eta}(t,\cdot)\big|_{TV(\R)}+\mathcal{H}_{BV}\big)\\
    &\quad+2\tfrac{\mathcal{H}}{\brho_{\max}^{1}}\big|\bW^{1}_{\eta}(t,\cdot)\big|_{TV(\R)} +2\tfrac{\|\brho_{\max}\|_{\infty}}{\brho_{\max}^{1}}\big(\mathcal{H}_{1}\big|\bW^{1}_{\eta}(t,\cdot)\big|_{TV(\R)}+\mathcal{H}_{2}\big|\bW^{2}_{\eta}(t,\cdot)\big|_{TV(\R)}+\mathcal{H}_{BV}\big)\\
    &\quad +2\mathcal{H}_{1}\big|\bW^{1}_{\eta}(t,\cdot)\big|_{TV(\R)}+2\mathcal{H}_{2}\big|\bW^{2}_{\eta}(t,\cdot)\big|_{TV(\R)} +\mathcal{H}_{BV}\\
    &=2\Big(\tfrac{\|\brho_{\max}\|_{\infty}}{\brho_{\max}^{2}}\mathcal{H}_{1}+\tfrac{\mathcal{H}}{\brho_{\max}^{1}}+\tfrac{\|\brho_{\max}\|_{\infty}}{\brho_{\max}^{1}}\mathcal{H}_{1}+\mathcal{H}_{1}\Big)\big|\bW^{1}_{\eta}(t,\cdot)\big|_{TV(\R)}\\
    &\quad +2\Big(\tfrac{\|\brho_{\max}\|_{\infty}}{\brho_{\max}^{1}}\mathcal{H}_{1}+\tfrac{\mathcal{H}}{\brho_{\max}^{2}}+\tfrac{\|\brho_{\max}\|_{\infty}}{\brho_{\max}^{2}}\mathcal{H}_{1}+\mathcal{H}_{1}\Big)\big|\bW^{2}_{\eta}(t,\cdot)\big|_{TV(\R)}\\
    &\quad +2\Big(\tfrac{\|\brho_{\max}\|_{\infty}}{\brho_{\max}^{2}}+\tfrac{\|\brho_{\max}\|_{\infty}}{\brho_{\max}^{1}}+1 \Big) \mathcal{H}_{BV}.
     \end{align*} 
    In a similar manner, we can derive the (almost) identical estimate for the change in time of the total variation of \(\bW_{2}\), leading us to the estimate
    \begin{gather*}
\tfrac{\dd}{\dd t}\Big(\big|\bW^1_{\eta}(t,\cdot)\big|_{TV(\R)}+\big|\bW^2_{\eta}(t,\cdot)\big|_{TV(\R)}\Big)=\tfrac{\dd}{\dd t} \big|\bW_{\eta}(t,\cdot)\big|_{TV(\R;\R^{2})}\\
\leq 2 \Big(\tfrac{\|\brho_{\max}\|_{\infty}}{\brho_{\max}^{2}}\mathcal{H}_{1}+\tfrac{\mathcal{H}}{\brho_{\max}^{1}}+\tfrac{\|\brho_{\max}\|_{\infty}}{\brho_{\max}^{1}}\mathcal{H}_{1}+2\mathcal{H}_{1}+\tfrac{\|\brho_{\max}\|_{\infty}}{\brho_{\max}^{1}}\mathcal{H}_{1}+\tfrac{\mathcal{H}}{\brho_{\max}^{2}}+\tfrac{\|\brho_{\max}\|_{\infty}}{\brho_{\max}^{2}}\mathcal{H}_{1}\Big)\big|\bW_{\eta}(t,\cdot)\big|_{TV(\R;\R^{2})}\\
\qquad +4\Big(\tfrac{\|\brho_{\max}\|_{\infty}}{\brho_{\max}^{2}}+\tfrac{\|\brho_{\max}\|_{\infty}}{\brho_{\max}^{1}}+1 \Big) \mathcal{H}_{BV}.
    \end{gather*}
Using Gronwall's inequality~\cite{Dragomir2003} yields: 
\begin{align*}
    |\bW_{\eta}(t,\cdot)|_{TV(\R;\R^{2})}&\leq \bigg(\big|\bW_{\eta}(0,\cdot)\big|_{TV(\R;\R^{2})}+4\Big(\tfrac{\|\brho_{\max}\|_{\infty}}{\brho_{\max}^{2}}+\tfrac{\|\brho_{\max}\|_{\infty}}{\brho_{\max}^{1}}+1 \Big) \mathcal{H}_{BV}\bigg)\\
    &\ \cdot\exp\bigg(2t \Big(\tfrac{\|\brho_{\max}\|_{\infty}}{\brho_{\max}^{2}}\mathcal{H}_{1}\!+\tfrac{\mathcal{H}}{\brho_{\max}^{1}}+\tfrac{\|\brho_{\max}\|_{\infty}}{\brho_{\max}^{1}}\mathcal{H}_{1}+2\mathcal{H}_{1}+\tfrac{\|\brho_{\max}\|_{\infty}}{\brho_{\max}^{1}}\mathcal{H}_{1}+\tfrac{\mathcal{H}}{\brho_{\max}^{2}}+\tfrac{\|\brho_{\max}\|_{\infty}}{\brho_{\max}^{2}}\mathcal{H}_{1}\Big)\bigg).
\end{align*}
As this estimate is uniform in the approximation, and it holds

\[
\big|\bW_{\eta}(0,\cdot)\big|_{TV(\R;\R^{2})}\leq |\bq_{0}|_{TV(\R;\R^{2})},
\]
we obtain the uniform \(TV\) bound for any initial datum of given \(TV\) regularity.
\end{proof}
\begin{remark}[Consistency with the \(TV\) estimate for nonlocal conservation laws]
Assuming there is no lane change, i.e.,\ \(S\equiv 0\), the total variation estimate derived in \cref{thm:tv_bound} reduces to:
\begin{equation}
\big|\bW_{\eta}(t,\cdot)\big|_{TV(\R;\R^{2})}\leq |\bq_{0}|_{TV(\R;\R^{2})}\ \forall t\in[0,T].\label{eq:TV_diminishing}
\end{equation}
Thus, the nonlocal term exhibits total variation diminishing behavior. This observation is not surprising because there is no coupling between the two nonlocal equations in this case. Consequently, we are dealing with the singular limit problem for scalar nonlocal conservation laws for which an estimate/bound similar to\cref{eq:TV_diminishing} was obtained in \cite[Theorem 3.2]{coclite2020general}.
\end{remark}

\subsection{Entropy admissibility}
In this section, we demonstrate that, given strong convergence, the solutions to the nonlocal system are entropy-admissible in the limit. The approach parallels the strategies outlined in\cite{bressan2021entropy,Marconi2023}:
\begin{theorem}[Entropy admissibility]\label{thm:entropy_admissibility}
Let \(\brho_{\eta}\in C\big([0,T];L^{1}_{\text{loc}}(\R;\R^{2})\big)\cap L^{\infty}\big((0,T);L^{\infty}(\R;\R^{2})\big)\) be the unique solution of \cref{eq:nonlocal_system}. Assume that there exists \(\brho^{*}\in C\big([0,T];L^{1}_{\text{loc}}(\R;\R^{2})\big)\cap L^{\infty}\big((0,T);L^{\infty}(\R;\R^{2})\big)\) such that
\[
\lim_{\eta\rightarrow 0}\|\brho_{\eta}-\brho^{*}\|_{C([0,T];L^{1}_{\text{loc}}(\R;\R^2))}=0,\qquad \exists C\in\R_{>0}:\ \sup_{\eta\in\R_{>0}}|\cW_{\eta}[\brho_{\eta}]|_{L^{\infty}((0,T);TV(\R;\R^{2}))}\leq C.\]
   Then, \(\brho^{*}\) satisfies the entropy admissibility condition in \cref{defi:entropy} for a general convex entropy \(\alpha''(x)\geq 0, \, \beta'(x)=\alpha'(x)[V(x)+x V'(x)]\).
\end{theorem}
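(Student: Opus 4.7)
The plan follows the strategy developed for scalar nonlocal conservation laws in \cite{bressan2021entropy,Marconi2023} and \cite{coclite2020general}, adapted to the weakly-coupled system at hand. Since the two equations of \cref{eq:nonlocal_system} share no flux coupling, it is enough to establish the entropy inequality $\mEF_i[\phi,\alpha,\brho^{*,i}]\geq 0$ separately for each $i\in\{1,2\}$, with the lane-changing source treated as a perturbation.

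First, by \cref{lemma:stability}, I may assume that $\brho_\eta$ is a strong solution arising from smoothed data and derive everything at that level of regularity, passing to the limit in the regularization parameter at the end. Fix $i\in\{1,2\}$, a nonnegative $\phi\in C^1_{\mathrm{c}}((-42,T)\times\R)$, and a convex $\alpha\in C^2(\R)$. The central trick, parallel to \cite{coclite2020general,bressan2021entropy}, is to multiply the $i$-th equation in \eqref{eq:nonlocal_system} by $\alpha'(\cW_\eta[\brho^i_\eta])\,\phi$ (evaluating $\alpha'$ at the \emph{nonlocal} argument rather than at $\brho^i_\eta$), and integrate over $\OT$.

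The heart of the proof is an algebraic reorganization of the resulting identity. Using the pointwise relation $\brho^i_\eta=\cW_\eta[\brho^i_\eta]-\eta\,\partial_x\cW_\eta[\brho^i_\eta]$ from \eqref{eq:identity_nonlocal}, every occurrence of $\brho^i_\eta$ in the flux can be written as ``principal part $\cW_\eta[\brho^i_\eta]$ plus an $\eta$-correction''. After integration by parts, the principal part reproduces exactly the entropy flux expression $\partial_t \alpha(\cW_\eta[\brho^i_\eta]) + \partial_x \beta_i(\cW_\eta[\brho^i_\eta])$, which is possible precisely because $\beta'_i(w)=\alpha'(w)\bigl(V_i(w)+w V_i'(w)\bigr)$ is tailor-made for the flux $f_i(w)=wV_i(w)$. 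The residue splits into three structurally distinct pieces: \textbf{(i)} $O(\eta)$-terms of the form $\eta\iint \Phi\bigl(\cW_\eta[\brho_\eta]\bigr)\,\partial_x \cW_\eta[\brho^i_\eta]\,\dd x\,\dd t$ with $\Phi$ uniformly bounded; \textbf{(ii)} the pointwise sign-definite term $-\eta\iint \alpha''\bigl(\cW_\eta[\brho^i_\eta]\bigr)V_i'\bigl(\cW_\eta[\brho^i_\eta]\bigr)(\partial_x \cW_\eta[\brho^i_\eta])^2\phi\,\dd x\,\dd t$, which is \emph{nonnegative} thanks to $\alpha''\geq 0$ together with the monotonicity $V_i'\leq 0$ of \cref{ass:general} (this is the key mechanism producing the one-sided entropy inequality from what would otherwise be an identity modulo $\eta$); and \textbf{(iii)} a source-term discrepancy $\alpha'(\cW_\eta[\brho^i_\eta])\bigl[S(\brho_\eta,\cW_\eta[\brho_\eta],\cdot)-S(\cW_\eta[\brho_\eta],\cW_\eta[\brho_\eta],\cdot)\bigr]\phi$, controlled by $\mathcal{O}\bigl(\eta\|\partial_x \cW_\eta[\brho_\eta]\|_{L^1}\bigr)$ thanks to the Lipschitz regularity of $H$ encoded in \cref{ass:general}.

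The $\eta\to 0$ limit is then routine: discard the nonnegative contribution (ii) to turn equality into inequality, and control (i) and (iii) via the uniform TV estimate of \cref{thm:tv_bound}, since $\eta\,\|\partial_x \cW_\eta[\brho_\eta]\|_{L^1(\R)}\leq \eta \sup_{\tau\in[0,T]}|\cW_\eta[\brho_\eta](\tau,\cdot)|_{TV(\R;\R^2)}\leq \eta C\to 0$. The same bound yields $\|\brho_\eta-\cW_\eta[\brho_\eta]\|_{L^1_{\loc}}\to 0$, so that the assumed convergence $\brho_\eta\to\brho^*$ in $C([0,T];L^1_{\loc})$ lifts to $\cW_\eta[\brho_\eta]\to\brho^*$ in the same topology. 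Continuity of $\alpha,\beta_i,V_i,S$ combined with the uniform $L^\infty$ bounds from \cref{theo:nonlocal_existence_uniqueness_maximum_principle} then enables dominated convergence and delivers $\mEF_i[\phi,\alpha,\brho^{*,i}]\geq 0$. The main obstacle is the careful bookkeeping in the reorganization above, where several successive integrations by parts must be performed and the vanishing-at-infinity provided by \cref{lem:W_vanishing_infty} is needed to discard the boundary terms; once this is done, the convexity/monotonicity sign argument in (ii) and the uniform TV control of the remainders essentially close the proof.
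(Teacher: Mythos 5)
Your overall architecture matches the paper's: work with the entropy pair evaluated at the nonlocal term \(\cW_\eta[\brho_\eta]\) rather than at \(\brho_\eta\), exploit the identity \(\brho_\eta^i=\cW_\eta[\brho_\eta^i]-\eta\,\partial_x\cW_\eta[\brho_\eta^i]\), treat the lane-changing source as an \(O(\eta)\) perturbation, and pass to the limit via the uniform \(TV\) bound and the assumed \(C([0,T];L^1_{\loc})\) convergence. Those parts are sound. The gap is in your step (ii), which is the heart of the matter. Multiplying the transport equation of \cref{lem:nonlocal_transport_equation} by \(\alpha'(\bW^i_\eta)\phi\) and using \(\beta_i'=\alpha'\cdot\big(V_i+(\cdot)V_i'\big)\) leaves the residual \(\alpha'(\bW^i_\eta)\big[g-\tfrac1\eta\int_x^\infty e^{(x-y)/\eta}g(t,y)\dd y\big]\) with \(g=V_i'(\bW^i_\eta)\bW^i_\eta\partial_x\bW^i_\eta\): the entropy dissipation is \emph{nonlocal}, and the pointwise sign-definite term \(-\eta\iint\alpha''(\bW^i_\eta)V_i'(\bW^i_\eta)(\partial_x\bW^i_\eta)^2\phi\) you posit (the exact analogue of the viscous-approximation term) does not appear. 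It would appear only if one could replace the exponential average of \(g\) by its pointwise value up to a negligible error; but that replacement error is of the same order as the term itself, because \(\|\partial_x\bW^i_\eta(t,\cdot)\|_{L^\infty(\R)}=O(1/\eta)\) (from \(\eta\partial_x\bW^i_\eta=\bW^i_\eta-\brho^i_\eta\)), so the "\(\eta\) times uniform \(TV\)" bookkeeping you invoke for the remainders cannot close this term. If entropy admissibility followed from such a local quadratic term, the singular limit problem would be essentially trivial, which it is not (cf.\ the counterexamples in \cite{ColomboCrippaMarconiSpinolo2021}).

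The paper instead keeps the nonlocal structure: it isolates the problematic flux contribution as \(T_1^\eta=\iint_{\OT}\brho^i_\eta\,\omega^i_\eta\) and decomposes \(\omega^i_\eta=G_\eta+L_\eta+P_\eta\), where \(P_\eta\ge0\) pointwise because the kernel and \(\phi\) are nonnegative and \(H(a,b)=I(b)-I(a)-V(b)\big(\alpha'(b)-\alpha'(a)\big)=\int_a^b\alpha''(u)\big[V(u)-V(b)\big]\dd u\ge0\) for convex \(\alpha\) and nonincreasing \(V\) (the argument of \cite{Marconi2023}), while \(G_\eta\) and \(L_\eta\) are shown to vanish using the quantitative commutator estimate of \cite[Lemma 4.1]{Marconi2023}, which uses the uniform \(TV\) bound in a more refined way than a crude \(\eta\cdot TV\) estimate. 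Your treatment of the source discrepancy and the final limit passage are fine, but you need either this decomposition or a genuine substitute for it; as written, the proposal contains neither, and its central sign mechanism is the wrong one for a nonlocal (as opposed to viscous) approximation.
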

\begin{proof}
Let us define \((\alpha, \beta), \, \alpha,\beta\in C^{2}(\R;\R)\)  such that \(\alpha''(x)\geq 0,\, \beta'(x)=\alpha'(x)[V(x)+x V'(x)]\). We also fix \(0\leq \varphi \in C^\infty_c(\Omega_T)\). Our goal is to prove that 
\begin{align*}
\mEF_1[\phi,\alpha,\brho^1_\ast]&\:\iint_{\OT}\alpha(\brho_\ast^1(t,x))\phi_{t}(t,x)+\beta_1(\brho_\ast^1(t,x))\phi_{x}(t,x)\dd x\dd t+\int_{\R}\alpha(\brho^1_{0}(x))\phi(0,x)\dd x\\
&\quad-\iint_{\OT}\alpha'(\brho^1_\ast(t,x)) S\big(\brho_\ast^{1}(t,x),\brho_\ast^{2}(t,x),\brho_\ast^{1}(t,x),\brho_\ast^{2}(t,x),x\big)\phi(t,x)\dd x\dd t \geq 0,\\
\mEF_2[\phi,\alpha,\brho^2_\ast]&\:\iint_{\OT}\alpha(\brho^2_\ast(t,x))\phi_{t}(t,x)+\beta_2(\brho^2_\ast(t,x))\phi_{x}(t,x)\dd x\dd t+\int_{\R}\alpha(\brho^2_{0}(x))\phi(0,x)\dd x\\
&\quad+\iint_{\OT}\alpha'(\brho_\ast^2(t,x)) S\big(\brho_\ast^{1}(t,x),\brho_\ast^{2}(t,x),\brho_\ast^{1}(t,x),\brho_\ast^{2}(t,x),x\big)\phi(t,x)\dd x\dd t \geq 0.
\end{align*}
We choose a sequence \(\eta_k\), which is still denoted by \(\eta\) and set \(\bW^i_{\eta}\:\tfrac{1}{\eta}\int_{x}^{\infty}\exp\big(\tfrac{x-y}{\eta}\big)\brho^{i}(t,y)\dd y\). Then, we set 
\begin{equation}
\begin{aligned}
\mEF_1[\phi,\alpha,\bW^1_{\eta}]&\:\iint_{\OT}\alpha(\bW^1_{\eta})\phi_{t}(t,x)+\beta_1(\bW^1_{\eta})\phi_{x}(t,x)\dd x\dd t+\int_{\R}\alpha(\bW^1_{\eta} (0,x))\phi(0,x)\dd x,\\
&\quad-\iint_{\OT}\alpha'(\bW^1_{\eta}) S\big(\bW_{\eta}, \bW_{\eta},x\big)\phi(t,x)\dd x\dd t \\
\mEF_2[\phi,\alpha,\bW^2_{\eta}]&\:\iint_{\OT}\alpha(\bW^2_{\eta})\phi_{t}(t,x)+\beta_2(\bW^2_{\eta})\phi_{x}(t,x)\dd x\dd t+\int_{\R}\alpha(\bW^2_{\eta}(0,x))\phi(0,x)\dd x\\
&\quad+\iint_{\OT}\alpha'(\bW^2_{\eta}) S\big(\bW_{\eta}, \bW_{\eta},x\big)\phi(t,x)\dd x\dd t.
\end{aligned}
\label{eq:entropy_system_nonlocal_term}
\end{equation}
We recall that by assumption \(\bW^i_{\eta}\to \brho^i\) in \(L^1_{\text{loc}}(\Omega_T)\) and that \[\lim_{k\to+\infty}\mEF_{i}[\phi,\alpha,\bW_{\eta_{k}}^{i}]=\mEF_i[\phi,\alpha,\brho_{*}^{i}].\]
Hence, we need to show: \begin{equation}\label{eq:aim}
    \lim_{k\to \infty} \mEF_{i}[\phi,\alpha,\bW_{\eta}^{i}] \geq 0\quad \forall \phi\in C^{\infty}_{\text{c}}(\OT;\R_{\geq0}),\quad \forall \alpha\in C^{2}(\R)\ \text{convex},\ \forall i\in\{1,2\}.
\end{equation} 
For simplicity, we use the notation \(\brho \ast \exp_\eta\coloneqq \tfrac{1}{\eta}\int_x^{\infty} \exp\big(\tfrac{x-y}{\eta}\big) \brho(t,y) \dd y
\),
First, we rewrite \(\mEF_i,\ i\in\{1,2\}\) and obtain, suppressing the subsequence index for \(\eta\in\R_{>0}\), 
  \begin{gather*}
\iint_{\OT}  \alpha(\bW^i_{\eta}) \partial_t \varphi + \left[\left(V(\bW^i_{\eta})\brho_{\eta}^i\right)\ast \exp_{\eta}\right]\partial_x \left[\alpha'(\bW^i_{\eta}) \varphi \right] \dd x\dd t+\int_\R \alpha\big(\bW^i_{\eta}(0,x)\big) \varphi(0,x) \dd x\\
=(-1)^{i+1} \iint_{\OT} \alpha'(\bW^i_{\eta}) \mathscr{S}\big(\bW_{\eta},\eta\partial_{x}\bW_{\eta}(t,x),x\big) \ast \exp_{\eta} \varphi(t,x) \dd x \dd t
    \end{gather*}  
    for \(\mathscr{S}\), as reported in \cref{eq:abbreviation_S}.
    Thanks to the equality \(\beta'_{i}(x)=\alpha'(x)\left[V(x)+x V'(x)\right],\ x\in\R\), we obtain  
    \begin{align*}
        \iint_{\OT} \beta_{i}(\bW^i_{\eta}) \partial_x \varphi \dd x \dd t&=-\iint_{\OT}\beta'_{i}(\bW^i_{\eta})\partial_{x}\bW^i_{\eta}\varphi\dd x\dd t\\
        &=-\iint_{\OT}\alpha'(\bW^{i}_{\eta})V(\bW^{i}_{\eta})\partial_{x}\bW^i_{\eta}\varphi\dd x\dd t-\iint_{\OT}\alpha'(\bW^{i}_{\eta})V'(\bW^{i}_{\eta})\bW^{i}_{\eta}\partial_{x}\bW^i_{\eta}\varphi\dd x\dd t
        \intertext{and integration by parts in the last term leads to (interpreting \(\tfrac{\dd}{\dd x}V(\bW^{i}_{\eta})=V'(\bW^{i}_{\eta})\partial_{x}\bW^{i}_{\eta}\))}
        &= \iint_{\OT} V(\bW^i_{\eta}) \bW^i_{\eta} \partial_x [\alpha'(\bW^i_{\eta})\varphi] \dd x \dd t.
    \end{align*}
    Then, by referencing\cref{eq:entropy_system_nonlocal_term} for \(i\in\{1,2\}\), we obtain the following:
    \begin{align}
        &\mEF_{i}[\phi,\alpha,\bW_{\eta}^{i}]\\
        &= \iint_{\OT} \left[V(\bW_\eta^{i}) \bW_i^{\eta} - \left(V(\bW_\eta^{i})\brho_{\eta}^i\right)\ast \exp_{\eta}\right] \partial_x [\alpha'(\bW_\eta^{i})\varphi] \dd x \dd t \\
        &\quad (-1)^{i+1} \iint_{\OT} \alpha'(\bW_\eta^{i}) \left[S\big(\brho_{\eta},\bW_{\eta},x\big) \ast \exp_{\eta}\right] \varphi(t,x) \dd x \dd t \\
        &\quad (-1)^{i} \iint_{\OT}\alpha'(\bW_\eta^{i}) S\big(\bW_{\eta},\bW_{\eta}, x\big)\phi(t,x)\dd x\dd t \\
        &= \iint_{\OT} \left[V(\bW_\eta^{i}) \bW_\eta^{i} - \left(V(\bW_\eta^{i})\brho_{\eta}^i\right)\ast \exp_{\eta}\right] \partial_x [\alpha'(\bW_\eta^{i})\varphi] \dd x \dd t 
        \\
        &\quad (-1)^{i} \iint_{\OT} \alpha'(\bW_\eta^{i}) 
        \left(\int_x^{+\infty} \exp\big(\tfrac{x-y}{\eta}\big)\left[-S\big(\brho_{\eta}(y,t),\bW_{\eta}(y,t),y\big) +S\big(\bW_{\eta}(t,x), \bW_{\eta}(x,t), x\big)\right] \dd y\right)\varphi(t,x) \dd x \dd t\\
     &=\iint_{\OT} \left[V(\bW_\eta^{i}) \bW_\eta^{i} - \left(V(\bW^i_{\eta})\brho_{\eta}^i\right)\ast \exp_{\eta}\right] \partial_x [\alpha'(\bW_\eta^{i})] \varphi\dd x \dd t\label{eq:two_terms_before_last_term}\\
     &\quad+\iint_{\OT} \left[V(\bW_\eta^{i}) \bW_\eta^{i} - \left(V(\bW_\eta^{i})\brho_{\eta}^i\right)\ast \exp_{\eta}\right] \alpha'(\bW_\eta^{i}) \partial_x \varphi\dd x \dd t \label{eq:one_term_before_last_term}\\
        &\quad(-1)^{i} \iint_{\OT}\!\! \alpha'(\bW_\eta^{i}) 
        \left(\int_x^{+\infty} \!\!\!\!\!\!\!\!\exp\big(\tfrac{x-y}{\eta}\big)\left[-S\big(\brho_{\eta}(t,y),\bW_{\eta}(t,y),y\big) +S\big(\bW_{\eta}(t,x),\bW_{\eta}(t,x), x\big)\right] \dd y\right)\varphi(t,x) \dd x \dd t. \label{eq:last_term}
    \end{align} 
    Note that the second term in the previous equality converges to zero for $\eta\to 0$:
    \begin{align*}
|\eqref{eq:one_term_before_last_term}|&\leq \iint_{\OT}\Big| V(\bW_\eta^{i}) \bW_\eta^{i} - \tfrac{1}{\eta}\int_{x}^{\infty}\exp\big(\tfrac{x-y}{\eta}\big)V(\bW_\eta^{i}(t,y))\brho_{\eta}^i(t,y)\dd y\Big| \big|\alpha'(\bW_\eta^{i}) \partial_x \varphi\dd x\big| \dd t \\
&\overset{\eqref{eq:identity_nonlocal}}{=}
\iint_{\OT}\!\!\Big| V(\bW_\eta^{i}) \bW_\eta^{i} - \tfrac{1}{\eta}\int_{x}^{\infty}\!\!\!\!\exp\big(\tfrac{x-y}{\eta}\big)V(\bW_\eta^{i}(t,y))\big(\bW_{\eta}^{i}(t,y)-\eta\partial_{y}\bW_{\eta}^{i}(t,y)\big)\dd y\Big| \big|\alpha'(\bW_\eta^{i}) \partial_x \varphi\dd x\big| \dd t 
\intertext{and splitting the difference in the sum and performing integration by parts  yields}
&\leq \|\alpha'\|_{L^{\infty}((0,\|\brho_{\max}\|_{\infty}))}\|\partial_{2}\phi\|_{L^{\infty}(\OT)}\iint_{\OT}\Big| -\int_{x}^{\infty}\!\!\exp\big(\tfrac{x-y}{\eta}\big)V'(\bW_\eta^{i}(t,y))\bW_{\eta}^{i}(t,y)\partial_{y}\bW_{\eta}^{i}(t,y)\dd y\Big|\dd x \dd t 
 \intertext{and a change of order of integration}
 &= \|\alpha'\|_{L^{\infty}((0,\|\brho_{\max}\|_{\infty}))}\|\partial_{2}\phi\|_{L^{\infty}(\OT)}\int_{0}^{T}\int_{\R}\Big|V'(\bW_\eta^{i}(t,y))\bW_{\eta}^{i}(t,y)\partial_{y}\bW_{\eta}^{i}(t,y)\Big| \int_{-\infty}^{y}\exp\big(\tfrac{x-y}{\eta}\big)\dd x\dd y\\
 &= \eta\|\alpha'\|_{L^{\infty}((0,\|\brho_{\max}\|_{\infty}))}\|\partial_{2}\phi\|_{L^{\infty}(\OT)}\int_{0}^{T}\int_{\R}\Big|V'(\bW_\eta^{i}(t,y))\bW_{\eta}^{i}(t,y)\partial_{y}\bW_{\eta}^{i}(t,y)\Big|\dd y\\
 &\leq\eta\|\alpha'\|_{L^{\infty}((0,\|\brho_{\max}\|_{\infty}))}\|\partial_{2}\phi\|_{L^{\infty}(\OT)}T\|V'\|_{L^{\infty}((0,\|\brho_{\max}\|_{\infty}))}\brho^{i}_{\max}|\bW_{\eta}^{i}|_{L^{\infty}((0,T);TV(\R))}
 \end{align*}
    The last term is bounded by assumption and converges to zero for \(\eta\rightarrow 0\), as claimed.

    The third term cancels out because, practically speaking, \(S\) and \(\alpha'\) are bounded, and \(\phi\) has compact support. Consequently, the
    integration in the exponential kernel yields the following (recalling the assumptions on the lane-changing in \cref{ass:general}):
    \begin{align*}
        |\eqref{eq:last_term}|&\leq\|\alpha'\|_{L^{\infty}((0,\|\brho_{\max}\|_{\infty}))}2\|\brho_{\max}\|_{\infty} \iint_{\OT}|\phi(t,x)|\int_{x}^{\infty}\exp\big(\tfrac{x-y}{\eta}\big)H(\bW_{\eta},y)\dd y\dd x\\
        &\leq \mathcal{H}\|\alpha'\|_{L^{\infty}((0,\|\brho_{\max}\|_{\infty}))}2\|\brho_{\max}\|_{\infty}\iint_{\OT}|\phi(t,x)|\int_{x}^{\infty}\exp\big(\tfrac{x-y}{\eta}\big)\dd y\dd x\\
        &\leq \eta\mathcal{H}\|\alpha'\|_{L^{\infty}((0,\|\brho_{\max}\|_{\infty}))}2\|\brho_{\max}\|_{\infty}\|\phi\|_{L^{\infty}(\OT)}\supp(\phi)
    \end{align*}
    which converges to zero for \(\eta\rightarrow 0\).
    Hence, the only term left needed to treat is the term in \eqref{eq:two_terms_before_last_term}. To accomplish this,  we defined 
    \begin{align*}
    T^{\eta}_1\coloneqq\iint_{\OT} \left[V(\bW_\eta^{i}) \bW_\eta^{i} - \left(V(\bW_\eta^{i})\brho_{\eta}^i\right)\ast \exp_{\eta}\right]  \partial_x [\alpha'(\bW_\eta^{i})]\varphi\dd x \dd t, 
    \end{align*}
     so we can write: 
    \begin{align*}
    T^{\eta}_1 &=\iint_{\OT} \int_x^{+\infty} \left[V(\bW_\eta^{i}(t,x))  - \left(V(\bW_\eta^{i}(t,y)\right) \right]  \partial_x [\alpha'(\bW_i^{\eta})](t,x)\varphi(t,x) \tfrac{1}{\eta} \exp\left(\tfrac{x-y}{\eta}\right) \brho^i_\eta(t,y) \dd y \dd x \dd t\\
    &=\iint_{\Omega_T} \brho^i_\eta (t,y) \omega^i_\eta(t,y) \dd y \dd t,
    \end{align*}
    where 
    \begin{equation}
    \label{eq:omega1}
    \begin{aligned}
        \omega^i_\eta(t,y)&\coloneqq \int_{-\infty}^y  \left[V(\bW_\eta^{i}(t,x))  - \left(V(\bW_\eta^{i}(t,y)\right) \right]  \partial_x [\alpha'(\bW_i^{\eta})](t,x)\varphi(t,x) \tfrac{1}{\eta} \exp\left(\tfrac{x-y}{\eta}\right)  \dd x\\
        &=\int_{-\infty}^y \underbrace{V(\bW_\eta^{i}(t,x))  \partial_x [\alpha'(\bW_i^{\eta})](t,x)}_{\eqqcolon\partial_x I(\bW^i_\eta)}\varphi(t,x) \tfrac{1}{\eta} \exp\left(\tfrac{x-y}{\eta}\right) \dd x\\
        &-V(\bW_\eta^{i}(t,y)) \int_{-\infty}^y \partial_x [\alpha'(\bW_i^{\eta})](t,x)\varphi(t,x) \tfrac{1}{\eta} \exp\left(\tfrac{x-y}{\eta}\right) \dd x.
    \end{aligned} 
    \end{equation}
    Using partial integration, we obtain 
    \begin{equation}
    \label{eq:omega}
    \begin{aligned}
        \omega^i_\eta(t,y)&=\frac{1}{\eta} I(\bW^i_\eta(t,y)) \varphi(t,y)-\int_{-\infty}^y I(\bW^i_\eta (t,x)) \partial_x \left[\varphi(t,x) \tfrac{1}{\eta} \exp\left(\tfrac{x-y}{\eta}\right)\right] \dd x\\
        &-V(\bW^i_\eta(t,y)) \left[\alpha'(\bW^i_\eta(t,y)) \varphi(t,y)\tfrac{1}{\eta}-\int_{-\infty}^y
        \alpha'(\bW^i_\eta(t,x)) \partial_x \left[\varphi(t,x)\tfrac{1}{\eta}\exp\left(\tfrac{x-y}{\eta}\right)\right] \dd x 
        \right]\\
        &=\int_{-\infty}^y [I(\bW^i_\eta(t,y))-I(\bW^i_\eta(t,x))] \partial_x \left[
        \varphi(t,x) \tfrac{1}{\eta} \exp\left(\tfrac{x-y}{\eta}\right)
        \right] \dd x\\
        &-V(\bW^i_\eta(t,y)) \int_{-\infty}^y [\alpha'(\bW^i_\eta(t,y))-\alpha'(\bW^i_\eta(t,x))] 
        \partial_x \left[\varphi(t,x) \tfrac{1}{\eta} \exp\left(\tfrac{x-y}{\eta}\right)\right] \dd x\\
        &=G_{\eta}(t,y)+L_{\eta}(t,y)+P_\eta(t,y),
    \end{aligned}
    \end{equation}
    with 
    \begin{align}
    \label{eq:G}
      G_{\eta}(t,y)&\coloneqq  \int_{-\infty}^y [I(\bW^i_\eta(t,y))-I(\bW^i_\eta(t,x))]  \left[
         \tfrac{1}{\eta} \exp\left(\tfrac{x-y}{\eta}\right)
        \right] \partial_x \varphi(t,x) \dd x,\\
        \label{eq:L}
        L_{\eta}(t,y)&\coloneqq -V(\bW^i_\eta(t,y)) \int_{-\infty}^y [\alpha'(\bW^i_\eta(t,y))-\alpha'(\bW^i_\eta(t,x))] 
         \left[ \tfrac{1}{\eta} \exp\left(\tfrac{x-y}{\eta}\right)\right] \partial_x \varphi(t,x) \dd x, 
    \intertext{and}
       P_\eta(t,y)&\coloneqq\int_{-\infty}^{y} H(\bW^i_\eta(t,x),\bW^i_\eta(t,y)) \varphi(t,x) \partial_x \left[\tfrac{1}{\eta}\exp\left(\tfrac{x-y}{\eta}\right)\right] \dd x\\
        \label{eq:P}
        &=\tfrac{1}{\eta^2} \int_{-\infty}^{y} H(\bW^i_\eta(t,x),\bW^i_\eta(t,y)) \varphi(t,x)  \exp\left(\tfrac{x-y}{\eta}\right) \dd x,
    \end{align}
    where 
    \begin{equation*}
        H(a,b)\coloneqq I(b)-I(a)-V(b)(\alpha'(b)-\alpha'(a)).
    \end{equation*}
    Next, by plugging \cref{eq:G}, \cref{eq:L} and \cref{eq:P} into \cref{eq:omega}, we can formulate:
    \begin{align*}
       \mEF_{i}[\phi,\alpha,\bW_{\eta}^{i}]&\geq 
       \iint_{\Omega_T} \brho_\eta^i(t,y) \left[G_\eta(t,y)+L_\eta(t,y)+P_\eta(t,y)\right] \dd y \dd t.
    \end{align*} 
    We now can show that 
    \begin{equation}
        \begin{aligned}\label{eq:claim1}
        \mEF_{i}[\phi,\alpha,\bW_{\eta}^{i}] &\geq 
          \iint_{\Omega_T} \brho_\eta^i(t,y) \left[G_\eta(t,y)+L_\eta(t,y)\right] \dd y \dd t.
 \end{aligned}
 \end{equation}
 It is sufficient to prove that $P_\eta \geq 0.$
 To accomplish this, we compute 
 \begin{equation*}
     \frac{\partial H}{\partial a}(u,b)=-I'(u)+V(b) \alpha''(u)=\alpha''(u)[V(b)-V(u)] 
 \end{equation*}
 and apply the same argument as in \cite[Proof of Theorem 1.2]{Marconi2023}, so it can be concluded that $P_\eta \geq 0.$  
 To establish \cref{eq:aim}, it suffices to show that the right-hand side of \cref{eq:claim1} vanishes for $\eta\to 0.$ 
 We now can show that 
 \begin{equation*}
     \lim_{\eta\to 0} \iint_{\Omega_T} \brho_\eta^i(t,y) G_\eta(t,y) \dd y \dd t=0.
 \end{equation*}
 To achieve this, we can write the following:
 \begin{align*}
     &\iint_{\Omega_T} \brho_\eta^i(t,y) G_\eta(t,y) \dd y \dd t\\
     &\leq 
     \iint_{\Omega_T} \brho_\eta^i(t,y) \int_{-\infty}^y |I(\bW_\eta^{i}(t,y)))-I(\bW_\eta^{i}(t,x)))| \tfrac{1}{\eta} \exp\left(\tfrac{x-y}{\eta}\right) |\partial_x\varphi(t,x)| \dd x \dd y \dd t\\
     &\overset{\text{Fubini}}{=} \iint_{\Omega_T} |\partial_x \varphi(t,x)| \int_x^{+\infty} \brho^i_\eta (t,y) |I(\bW_\eta^{i}(t,y)))-I(\bW_\eta^{i}(t,x)))| \tfrac{1}{\eta} \exp\left(\tfrac{x-y}{\eta}\right) \dd y \dd x \dd t.
 \end{align*}
 Because $\varphi$ is compactly supported by applying~\cite[Lemma 4.1]{Marconi2023}, we can conclude that it vanishes for $\eta\to 0.$ 
 
 Analogously, one can show that 
\begin{equation*}
     \lim_{\eta\to 0} \iint_{\Omega_T} \brho_\eta^i(t,y) L_\eta(t,y) \dd y \dd t=0,
 \end{equation*}
which concludes the proof.
 
\end{proof}

\subsection{Main Theorem and some Corollaries}
So far, we have proven entropy admissibility in \cref{thm:entropy_admissibility} and for the nonlocal operator a \(TV\) bound uniform in \(\eta\in\R_{>0}\) in \cref{thm:tv_bound}. However, this \(TV\) bound is only in space, and to obtain compactness in \(C(L^{1})\), a ``time-compactness'' is required as well. This is what is established in the next theorem:

\begin{theorem}[Compactness of $\bW_\eta$]\label{thm:compactness}
The set of nonlocal terms $(\bW_\eta)_{\eta\in \R_{>0}}\subseteq C\big([0,T];L^{1}_{\loc}(\R;\R^{2})\big)$ of solutions to \cref{eq:W_system} is compactly embedded into $C\big([0,T];L^{1}_{\loc}(\R;\R^{2})\big),$ i.e.,
\[
\{\bW_{\eta},\ \eta\in\R_{>0}\}\overset{\text{c}}{\hookrightarrow}C\big([0,T];L^{1}_{\loc}(\R;\R^{2})\big)
\]
\end{theorem}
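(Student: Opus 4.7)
The plan is a classical Arzelà--Ascoli argument in $C\big([0,T];L^{1}_{\loc}(\R;\R^{2})\big)$, which (since $L^{1}_{\loc}(\R;\R^{2})$ is a Fréchet space under the seminorms $\|\cdot\|_{L^{1}([-N,N];\R^{2})}$, $N\in\N$) reduces to verifying (a) relative compactness of the ``time slice'' $\{\bW_{\eta}(t,\cdot):\eta\in\R_{>0}\}$ in $L^{1}_{\loc}(\R;\R^{2})$ for each fixed $t\in[0,T]$, and (b) uniform-in-$\eta$ equicontinuity of $t\mapsto\bW_{\eta}(t,\cdot)$ into $L^{1}_{\loc}(\R;\R^{2})$. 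Throughout I will first argue on the smooth approximations $\bW_{\eta,\epsilon}$ provided by \cref{lemma:stability}, for which $\bW_{\eta,\epsilon}\in W^{2,\infty}_{\loc}(\OT;\R^{2})$ and integrations by parts are justified, and then let $\epsilon\to 0$.

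Step (a) is immediate from results already established. \Cref{theo:nonlocal_existence_uniqueness_maximum_principle} together with the identity $\brho^{i}=\bW_{\eta}^{i}-\eta\partial_{x}\bW_{\eta}^{i}$ of \cref{eq:identity_nonlocal} gives the uniform bound $\|\bW_{\eta}(t,\cdot)\|_{L^{\infty}(\R;\R^{2})}\leq\|\brho_{\max}\|_{\infty}$, while \cref{thm:tv_bound} gives $\sup_{\eta>0,\,t\in[0,T]}|\bW_{\eta}(t,\cdot)|_{TV(\R;\R^{2})}<\infty$. Since $BV(K)\hookrightarrow L^{1}(K)$ is compact for every compact $K\subset\R$, the family $\{\bW_{\eta}(t,\cdot)\}_{\eta}$ is relatively compact in $L^{1}_{\loc}(\R;\R^{2})$ for each fixed $t$.

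Step (b) is where the real work lies: I will show that $\|\partial_{t}\bW_{\eta}(t,\cdot)\|_{L^{1}(K;\R^{2})}\leq C_{K}$ uniformly in $(t,\eta)$, from which the Lipschitz estimate $\|\bW_{\eta}(t,\cdot)-\bW_{\eta}(s,\cdot)\|_{L^{1}(K;\R^{2})}\leq C_{K}|t-s|$ follows. Starting from \cref{eq:W_system}, the local transport term is bounded by $\|V_{i}\|_{\infty}|\bW_{\eta}^{i}(t,\cdot)|_{TV(\R)}$. For the first nonlocal term, exchanging the order of integration (Fubini) and using $\int_{-\infty}^{y}\tfrac{1}{\eta}\exp(\tfrac{x-y}{\eta})\dd x=1$ collapses the estimate to
\[
\int_{\R}|V_{i}'(\bW_{\eta}^{i})|\,|\bW_{\eta}^{i}|\,|\partial_{y}\bW_{\eta}^{i}|\dd y\leq\|V_{i}'\|_{\infty}\|\brho_{\max}\|_{\infty}|\bW_{\eta}^{i}(t,\cdot)|_{TV(\R)},
\]
which is uniform in $\eta$ by \cref{thm:tv_bound}. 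The source-integral term is treated the same way: after Fubini one is left with $\int_{K}|\mathscr{S}(\bW_{\eta},\eta\partial_{y}\bW_{\eta},y)|\dd y$, which by the definition of $\mathscr{S}$ in \cref{eq:abbreviation_S} and the uniform bounds on $H,\,\partial_{1}H,\,\partial_{2}H$ in \cref{ass:general} is controlled by a constant depending only on $\|\brho_{\max}\|_{\infty}$, $\mathcal H$, $\mathcal H_{1}$, $\mathcal H_{2}$, the $TV$ bound, and $|K|$.

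Combining (a) and (b) via Arzelà--Ascoli applied on each seminorm $\|\cdot\|_{L^{1}([-N,N];\R^{2})}$ and extracting a diagonal subsequence in $N$ yields the asserted compact embedding. The only genuinely delicate point is (b): a priori the nonlocal kernel has infinite support and $\mathscr{S}$ need not be globally integrable, so one must localize onto a compact $K$ and exploit both the uniform spatial $TV$ bound from \cref{thm:tv_bound} and the $L^{\infty}$-and-$BV_{x}$ assumptions on $H$ from \cref{ass:general} to absorb the integrals coming out of Fubini; everything else is routine.
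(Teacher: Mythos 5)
Your overall strategy coincides with the paper's: the paper also combines pointwise-in-time compactness of the slices $\{\bW_\eta(t,\cdot)\}_\eta$ (from the uniform $TV$ bound of \cref{thm:tv_bound} plus Helly-type compactness) with a uniform Lipschitz-in-time estimate in $L^1(\Omega)$ obtained by integrating \cref{eq:W_system}, and then invokes a Simon/Arzel\`a--Ascoli compactness lemma; your treatment of the transport term and of the first nonlocal term (Fubini, $\tfrac1\eta\int_{-\infty}^{y}\exp(\tfrac{x-y}{\eta})\dd x=1$, then the $L^\infty$ and $TV$ bounds) is exactly the paper's computation.

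There is, however, one concrete error in your step (b): after Fubini the source term is \emph{not} reduced to $\int_{K}|\mathscr S|\dd y$. Since the kernel integrates over $(x,+\infty)$, exchanging the order of integration leaves the $y$-variable ranging over the unbounded half-line $(\inf K,+\infty)$, so the quantity to control is $\int_{\inf K}^{\infty}\big|\mathscr S\big(\bW_\eta(t,y),\eta\partial_y\bW_\eta(t,y),y\big)\big|\dd y$. The bounds $\mathcal H,\mathcal H_1,\mathcal H_2$ and the measure $|K|$ that you list do not control this: $|\mathscr S|\le 2\,|H(\bW^1_\eta,\bW^2_\eta,\cdot)|$ is merely bounded, and a bounded function need not be integrable on a half-line. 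What saves the estimate is precisely the assumption \cref{eq:bullshit} of \cref{ass:general}, i.e.\ $\|H\|_{L^{\infty}(\cdot;BV(\R))}\le\mathcal H_{BV}$ with $BV(\R)\subset L^1(\R)$ in the paper's convention, which yields $\int_{\R}|\mathscr S|\dd y\le 2\mathcal H_{BV}$ and hence the paper's bound $2|t_1-t_2|\mathcal H_{BV}$. Your closing paragraph does gesture at the ``$BV_x$ assumption on $H$'', so the right ingredient is on your radar, but the displayed estimate as written is incorrect and should be replaced by the half-line integral controlled via $\mathcal H_{BV}$ (no dependence on $|K|$). The remaining points (smoothing via \cref{lemma:stability} before differentiating in time, diagonal extraction over the exhaustion $[-N,N]$) are fine and match the paper.
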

\begin{proof}
We now apply~\cite[Lemma 1]{Simon1986}. In particular, according to the notation in~\cite[Lemma 1]{Simon1986}, we set the Banach space \(B=L^1_{\text{loc}}(\Omega)\) with \(\Omega\subset\R\) open bounded and for \(t\in[0,T]\)
\[F(t) \coloneqq\big\{\bW_\eta(t,\cdot)\in L^1_{\text{loc}}(\R),\quad \eta \in \R_{>0}\big\}.\] 
According to ~\cite[Theorem 13.35]{leoni}, the set \(F(t)\) is compact in \(L^1_{\text{loc}}(\R)\) because of the total uniform variation bound in the spatial component of \(\bW_\eta\) proved in \cref{thm:tv_bound}. Moreover, the set \((\bW_\eta)_{\eta\in \R_{>0}}\) is uniformly equi-continuous. To accomplish this, we estimate for \((t_{1},t_{2})\in[0,T]\) (assuming we have regular enough solutions, that we can assume thanks to \cref{lemma:stability})
\begin{align*}
   & \|\bW^{1}_{\eta}(t_1,\cdot)-\bW^{1}_{\eta}(t_2,\cdot)\|_{L^1(\Omega)}=\Big\| \int_{t_1}^{t_2} \partial_t \bW^{1}_{\eta}(s,\cdot) \dd s \Big\|_{L^1(\Omega)}\\
    &\overset{\eqref{eq:W_system}}{\leq}\bigg\|\int_{t_1}^{t_2}V_{1}(\bW^{1}_{\eta}(s,\cdot))\partial_{x}\bW^{1}_{\eta}(s,\cdot) \dd s\bigg\|_{L^1(\Omega)}\\
    &\quad+\bigg\|\int_{t_1}^{t_2}\!\!\!\!\tfrac{1}{\eta}\int_{*}^{\infty}\!\!\!\!\exp(\tfrac{*-y}{\eta})V_{1}'(\bW^{1}_{\eta}(s,y))\bW^{1}_{\eta}(s,y)\partial_{y}\bW^{1}_{\eta}(s,y)\dd y \dd s \bigg\|_{L^1(\Omega)}\\
    &\quad+\bigg\|\int_{t_1}^{t_2}\!\!\!\!\tfrac{1}{\eta}\int_{\ast}^{\infty}\!\!\!\!\!\exp(\tfrac{\ast-y}{\eta})\mathscr{S}\big(\bW_{\eta}(t,y),\eta\partial_{y}\bW_{\eta}(t,y),y\big)\dd y\bigg\|_{L^1(\Omega)}\\
    &\leq \|V_1\|_{L^\infty((0,\|\brho_{0}\|_{L^{\infty}(\R;\R^{2})}))} |\bW^{1}_{\eta}|_{L^\infty((0,T); TV(\R))} |t_1-t_2|\\
    &\quad+ \|V'_1\|_{L^\infty((0,\|\brho_{0}\|_{L^{\infty}(\R;\R^{2})}))} \|\bW^{1}_{\eta}\|_{L^\infty((0,T); L^\infty(\R))}|\bW^{1}_{\eta}|_{L^\infty((0,T); TV(\R))} |t_1-t_2|\\
    &\quad +\int_{t_{1}}^{t_{2}}\!\!\!\!\int_{\R}\Big|S\big(\bW^1_{\eta}(t,y)-\eta \partial_{2}\bW^1_{\eta}(t,y),\bW^2_{\eta}(t,y)-\eta \partial_{2}\bW^2_{\eta}(t,y),\bW^1_{\eta}(t,y),\bW^2_{\eta}(t,y), y\big)\Big|\tfrac{1}{\eta}\int_{-\infty}^{y}\!\!\!\!\!\!\e^{\frac{x-y}{\eta}}\dd x\dd y\dd s\\
      &\overset{\cref{ass:general}}{\leq} \left(\|V_1\|_{L^\infty((0,\|\brho_{0}\|_{L^{\infty}(\R;\R^{2})}))}+\|V'_1\|_{L^\infty((0,\|\brho_{0}\|_{L^{\infty}(\R;\R^{2})}))}\|\brho_0\|_{L^{\infty}(\R;\R^{2})}\right)
    |\brho_{0}|_{TV(\R;\R^{2})}|t_1-t_2|\\
    &\quad+2|t_{1}-t_{2}|\mathcal{H}_{BV}.
\end{align*}
After repeating the same computations for \(\bW^{2}_{\eta}\) and taking into account that the bounds obtained are uniform in the approximation of the initial data set, we establish the claim.
\end{proof}

Thanks to the compactness result given in \cref{thm:compactness}, we can ascertain (even directly) the convergence to a weak solution. Furthermore, due to the confirmation of entropy admissibility in \cref{thm:entropy_admissibility}, we also demonstrate the convergence to the entropy solution.
\begin{corollary}[Convergence to a weak (local) solution]\label{cor:convergence_weak}
For every sequence \(\{\eta_k\}_{k\in \N}\subset \R_{>0}\) with \(\lim_{k\to +\infty}\eta_k=0\) there exists a subsequence (denoted again by \((\eta_{k})_{k\in\N}\)) and a function \[\brho_\ast\in C\big([0,T];L^1_{\loc}(\R;\R^2)\big)\] so that the solution $\brho_{\eta_k}\in C\big([0,T]; L^1_{\loc}(\R;\R^2)\big)$ of the nonlocal system of balance laws, as given in~\cref{defi:nonlocal_system}, converges in \(C\big([0,T]; L^1_{\loc}(\R;\R^2)\big)\) to the limit function $\brho_{\ast},$. The same holds for the nonlocal term \(\bW_{\eta_k}\).
\end{corollary}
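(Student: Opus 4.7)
The plan is to derive this corollary essentially as a direct consequence of the compactness result in \cref{thm:compactness} combined with the pointwise identity \cref{eq:identity_nonlocal}, which links the density \(\brho^{i}_{\eta}\) to the nonlocal term \(\bW^{i}_{\eta}\) via \(\brho^{i}_{\eta}=\bW^{i}_{\eta}-\eta\partial_{x}\bW^{i}_{\eta}\), together with the uniform TV bound from \cref{thm:tv_bound}.

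First, I would apply \cref{thm:compactness} to the sequence \((\bW_{\eta_{k}})_{k\in\N}\): since this set is precompact in \(C([0,T];L^{1}_{\loc}(\R;\R^{2}))\), a diagonal extraction along an exhaustion of \(\R\) by bounded open sets yields a subsequence (again denoted \(\eta_{k}\)) and a limit \(\brho_{\ast}\in C([0,T];L^{1}_{\loc}(\R;\R^{2}))\) such that \(\bW_{\eta_{k}}\to\brho_{\ast}\) in this space. This already gives the second claim of the corollary.

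Next, to transfer the convergence from \(\bW_{\eta_{k}}\) to \(\brho_{\eta_{k}}\), I would use \cref{eq:identity_nonlocal} to write, for any bounded \(\Omega\subset\R\) and \(t\in[0,T]\) and \(i\in\{1,2\}\),
\[
\|\brho^{i}_{\eta_{k}}(t,\cdot)-\bW^{i}_{\eta_{k}}(t,\cdot)\|_{L^{1}(\Omega)}
=\eta_{k}\,\|\partial_{x}\bW^{i}_{\eta_{k}}(t,\cdot)\|_{L^{1}(\Omega)}
\leq \eta_{k}\,|\bW^{i}_{\eta_{k}}(t,\cdot)|_{TV(\R)}.
\]
By \cref{thm:tv_bound}, the right-hand side is bounded by \(\eta_{k}\) times a constant independent of \(k\) and \(t\), hence tends to \(0\) uniformly in \(t\in[0,T]\). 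Combining this with the strong convergence \(\bW_{\eta_{k}}\to\brho_{\ast}\) in \(C([0,T];L^{1}(\Omega;\R^{2}))\) via the triangle inequality yields \(\brho_{\eta_{k}}\to\brho_{\ast}\) in \(C([0,T];L^{1}(\Omega;\R^{2}))\) for every bounded \(\Omega\), which is precisely convergence in \(C([0,T];L^{1}_{\loc}(\R;\R^{2}))\).

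I do not expect a real obstacle here: both building blocks (compactness of \(\bW_{\eta}\) and the \(\eta\)-scaled identity between \(\brho_{\eta}\) and \(\bW_{\eta}\)) have already been established, and the argument is essentially bookkeeping. The only mild technical point is the need for a diagonal extraction to pass from compactness on each bounded \(\Omega\) to convergence in \(L^{1}_{\loc}\), but this is standard. Note also that the limit \(\brho_{\ast}\) obtained this way is automatically the common limit of \(\brho_{\eta_{k}}\) and \(\bW_{\eta_{k}}\), which is exactly the setup required to invoke \cref{thm:entropy_admissibility} in the subsequent main theorem.
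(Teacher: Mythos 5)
Your proposal is correct and follows essentially the same route as the paper: compactness of \(\bW_{\eta}\) from \cref{thm:compactness} gives the convergent subsequence, and the identity \cref{eq:identity_nonlocal} together with the uniform \(TV\) bound transfers the convergence to \(\brho_{\eta_k}\) (you are in fact slightly more careful than the paper, which bounds \(|\bW_{\eta_k}(t,\cdot)|_{TV}\) by \(|\brho_0|_{TV}\) even though the source term forces the Gronwall-type bound of \cref{thm:tv_bound}). The only piece you omit is the paper's closing observation that the limit \(\brho_\ast\) is a weak solution of the local system (justified by the strong \(C(L^1_{\loc})\) convergence and the uniform \(L^\infty\) bounds), which the corollary's title implicitly asserts.
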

\begin{proof}
Applying \cref{thm:compactness} the set of nonlocal terms \(\bW_{\eta_k}\) is compact in \(C\big([0,T]; L^1_{\loc}(\R;\R^2)\big)\). This is why there exists a limit function \(\brho_{\ast}\in C\big([0,T];L^1_{\loc}(\R;\R^2)\big)\) such that
\begin{equation*}
    \lim_{k \to \infty} \|\bW_{\eta_k}-\brho_{\ast}\|_{C([0,T]; L^1_{\loc}(\R;\R^2))}=0.
\end{equation*} 
Thanks to \cref{eq:identity_nonlocal}, we can write, for \(t\in[0,T]\),
\begin{equation*}
    \|\bW_{\eta_k}(t,\cdot)-\brho_{\eta_k}(t,\cdot)\|_{L^1(\R;\R^2)}=\eta_k |\bW_{\eta_k}(t,\cdot)|_{TV(\R;\R^2)}\leq \eta_k |\brho_0|_{TV(\R;\R^2)}
\end{equation*}
and, thus, we also (as \(\lim_{k\rightarrow\infty}\eta_{k}=0\)) obtain 
\begin{equation*}
    \lim_{k \to \infty} \|\brho_{\eta_k}-\brho_{\ast}\|_{C([0,T]; L^1_{\loc}(\R^2))}=0.
\end{equation*} 
$\brho_\ast$ is a weak solution of the local system in \cref{defi:local_system} thanks to convergence in \(C\big([0,T]; L^1_{\loc}(\R;\R^2)\big),\) and due to the uniform bounds on \(\|\brho_{\eta_k}\|_{L^{\infty}((0,T);L^{\infty}(\R;\R^{2}))}\).
\end{proof}
This brings us to our final and most significant result. By bringing together the findings of the previous theorem, we ultimately assert the strong convergence of both the nonlocal term and nonlocal solution to the entropy solution of the local conservation law for \(\eta\rightarrow 0\).
\begin{theorem}[Convergence to the Entropy solution]\label{theo:singular_limit_problem}
Given \cref{ass:general}, the nonlocal term $\cW_\eta[\brho_\eta]$ and the corresponding nonlocal solution $\brho_\eta\in C\big([0,T];L^{1}_{\loc}(\R;\R^{2})\big)$ of the nonlocal system in \cref{defi:nonlocal_system} converge in $C\big([0,T];L^{1}_{\loc}(\R;\R^{2})\big)$ to the entropy solution of the corresponding local system of balance laws in \cref{defi:local_system}.
\end{theorem}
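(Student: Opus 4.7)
The plan is to assemble the earlier results in a single Urysohn-type argument: compactness plus identification of the limit via weak convergence plus entropy admissibility plus uniqueness of the local entropy solution. Concretely, fix an arbitrary sequence $\eta_k\downarrow 0$. By \cref{thm:compactness}, the family $\{\bW_{\eta_k}\}$ is relatively compact in $C\big([0,T];L^{1}_{\loc}(\R;\R^2)\big)$, so I can extract a subsequence (still written $\eta_k$) and a limit $\brho_\ast\in C\big([0,T];L^{1}_{\loc}(\R;\R^2)\big)$ with $\bW_{\eta_k}\to\brho_\ast$. The key identity $\brho_{\eta_k}=\bW_{\eta_k}-\eta_k\partial_x\bW_{\eta_k}$ together with the uniform total variation bound of \cref{thm:tv_bound} gives $\|\brho_{\eta_k}(t,\cdot)-\bW_{\eta_k}(t,\cdot)\|_{L^1(\R;\R^2)}\le \eta_k\,|\bW_{\eta_k}(t,\cdot)|_{TV(\R;\R^2)}\to 0$ uniformly in $t\in[0,T]$, so $\brho_{\eta_k}\to\brho_\ast$ in the same topology.

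Next, I would pass to the limit in the weak formulation of \cref{defi:nonlocal_system} exactly as in \cref{cor:convergence_weak}: the strong $C(L^1_{\loc})$-convergence of both $\brho_{\eta_k}$ and $\bW_{\eta_k}$ to $\brho_\ast$, combined with the uniform $L^\infty$ bound from \cref{theo:nonlocal_existence_uniqueness_maximum_principle} and the Lipschitz continuity of $V_i$ and $S$ in their arguments, lets me take $k\to\infty$ inside each term. Hence $\brho_\ast$ is a weak solution of the local system \cref{defi:local_system}. To upgrade to entropy admissibility I apply \cref{thm:entropy_admissibility}, whose hypotheses are satisfied because (i) the strong $C(L^1_{\loc})$-convergence has just been established, and (ii) the uniform $TV$ bound for $\bW_{\eta_k}$ is precisely \cref{thm:tv_bound}. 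Therefore $\brho_\ast$ satisfies both entropy inequalities in \cref{defi:entropy}.

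At this point uniqueness of the entropy solution for the local problem enters: by the local existence/uniqueness theorem, the entropy solution $\brho_\ast$ is uniquely determined by the initial datum $\brho_0$, independently of the subsequence chosen. This is the standard Urysohn subsequence principle, and it promotes subsequential convergence to convergence of the whole family: any sequence $\eta_k\downarrow 0$ admits a further subsequence converging in $C(L^1_{\loc})$ to the same limit $\brho_\ast$, so $\brho_{\eta}\to\brho_\ast$ and $\bW_\eta[\brho_\eta]\to\brho_\ast$ in $C\big([0,T];L^{1}_{\loc}(\R;\R^2)\big)$ as $\eta\to 0$.

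The only genuine obstacle is verifying that the compactness and stability machinery of \cref{thm:compactness,thm:entropy_admissibility,thm:tv_bound} transfers cleanly to \emph{both} components of the system simultaneously, given that the coupling through $S$ is nonlinear in $\brho$ and in $\bW_\eta[\brho]$; but the system is only right-hand-side coupled and the velocity $\boldsymbol V$ is diagonal, so each component inherits the scalar-type passage to the limit, with the coupling terms handled by the strong convergence of the full vector $\brho_{\eta_k}$ and the uniform bounds on $H$ from \cref{ass:general}. Everything else is a bookkeeping argument that has already been carried out in the preceding lemmas and corollary.
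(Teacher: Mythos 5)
Your proposal is correct and follows essentially the same route as the paper, which simply cites \cref{cor:convergence_weak} (compactness, identification of the limit via the identity $\brho_{\eta}=\bW_{\eta}-\eta\partial_x\bW_{\eta}$ and the uniform $TV$ bound) together with \cref{thm:entropy_admissibility}. You additionally spell out the Urysohn subsequence argument using uniqueness of the local entropy solution to upgrade subsequential convergence to convergence of the whole family, a step the paper leaves implicit but which is indeed needed for the statement as phrased.
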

\begin{proof}
This is a direct consequence of \cref{cor:convergence_weak} and \cref{thm:entropy_admissibility}.
\end{proof}

\begin{remark}[Generalization to larger systems and more general kernels]\label{rem:generalizations}
~
\begin{description}
    \item[Larger Systems:]
By slightly adjusting the right-hand side of the system of nonlocal balance laws and imposing the corresponding assumptions on the source term, as reported in \cref{ass:general}, the same type of convergence can be proven for a system of any dimension (and not solely, as we did here, for \(N=2\)). The primary purpose of all our arguments is that the nonlocal fluxes decoupled. Coupling different equations within the fluxes might undermine the required uniform maximum principle. This will undoubtedly complicate any representation of the nonlocal terms in \cref{lem:nonlocal_transport_equation}.
\item[More general kernels:] It is very likely that the obtained convergence can be extended to more general kernels, such as a convex kernel, as described in \cite{Marconi2023}. The result should also hold for kernels with fixed support of the type reported in \cite{keimer42}. 
\end{description}
\end{remark}
\section{Numerical simulations}\label{sec:numerics}
In this section, we present several numerical simulations conducted using an Upwind-type numerical scheme, as detailed in~\cite{chiarello2019non-local, friedrich2018godunov}. In particular, we consider the source term 
\[S(\brho_1, \brho_2)\coloneqq\big(\brho_2-\brho_1\big)\chi_{[-2,2]}(x),\ x\in\R.\]  \Cref{fig:convergence_to_0} shows the convergence of the approximate nonlocal solution to the local one for decreasing values of \(\eta\). The corresponding $(t,x)-$plots are shown in \cref{fig:tx_plots}.
As can be observed, over time, the densities of both lanes converge due to the lane-changing behavior.

\begin{figure}
\centering
		\includegraphics[scale=0.35]{./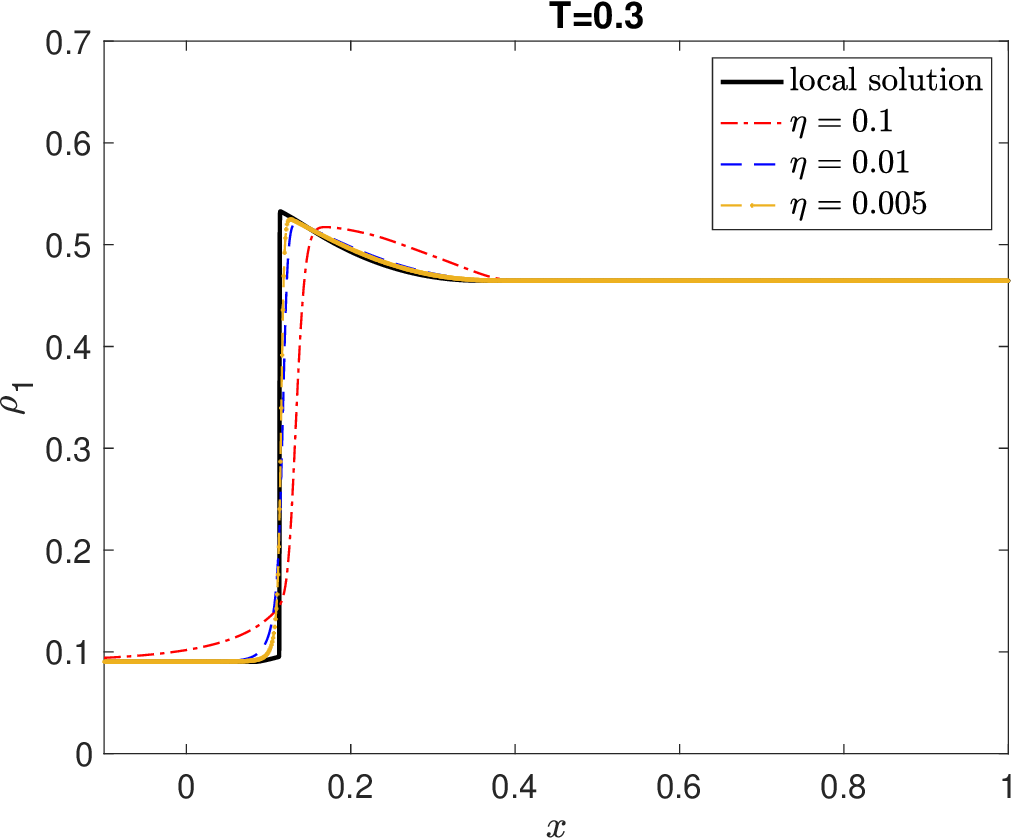}
		\includegraphics[scale=0.35]{./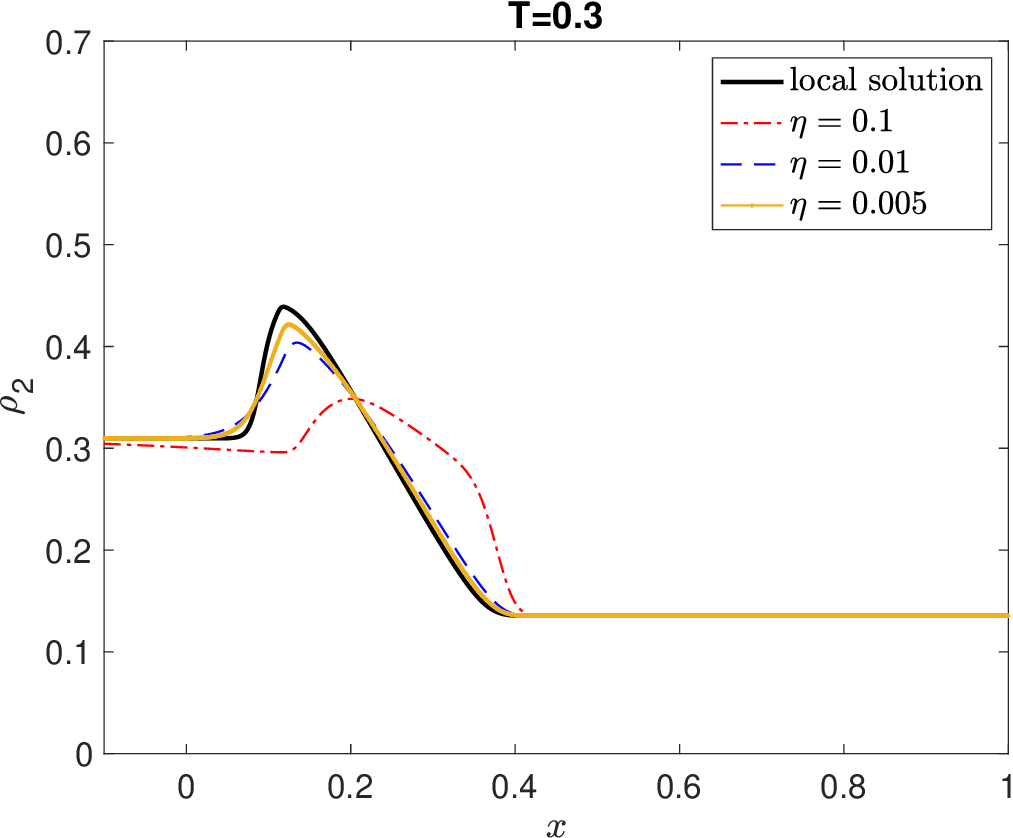}\\
		\includegraphics[scale=0.35]{./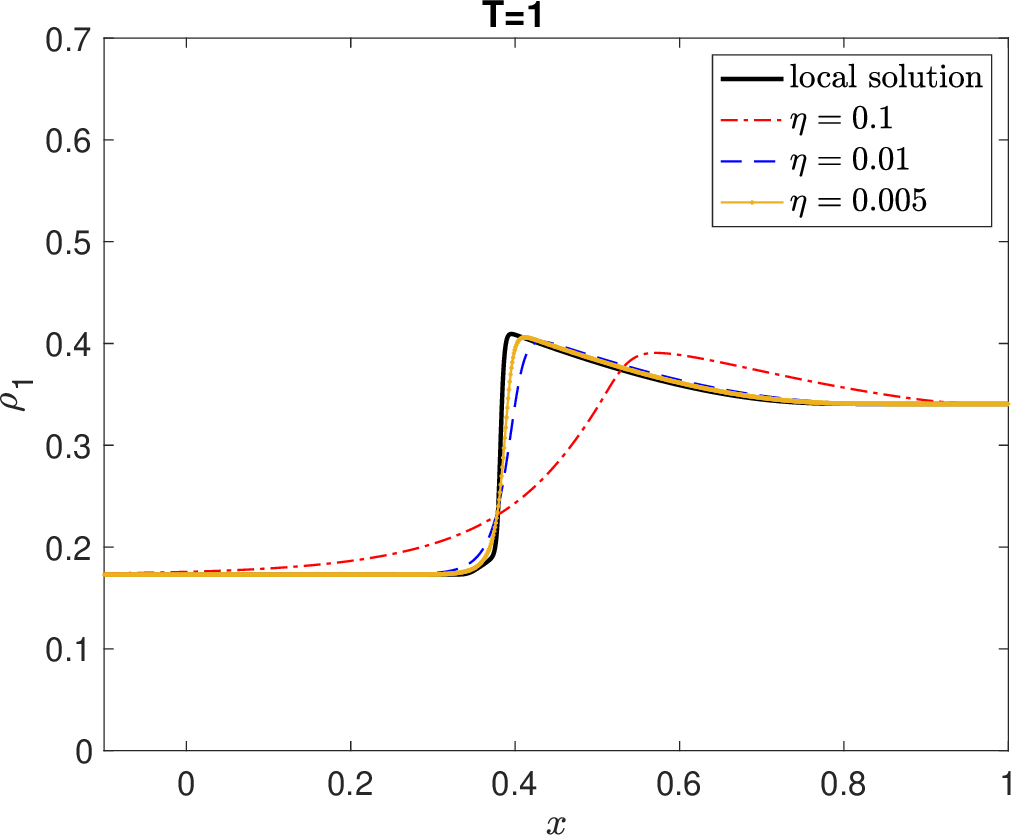}
		\includegraphics[scale=0.35]{./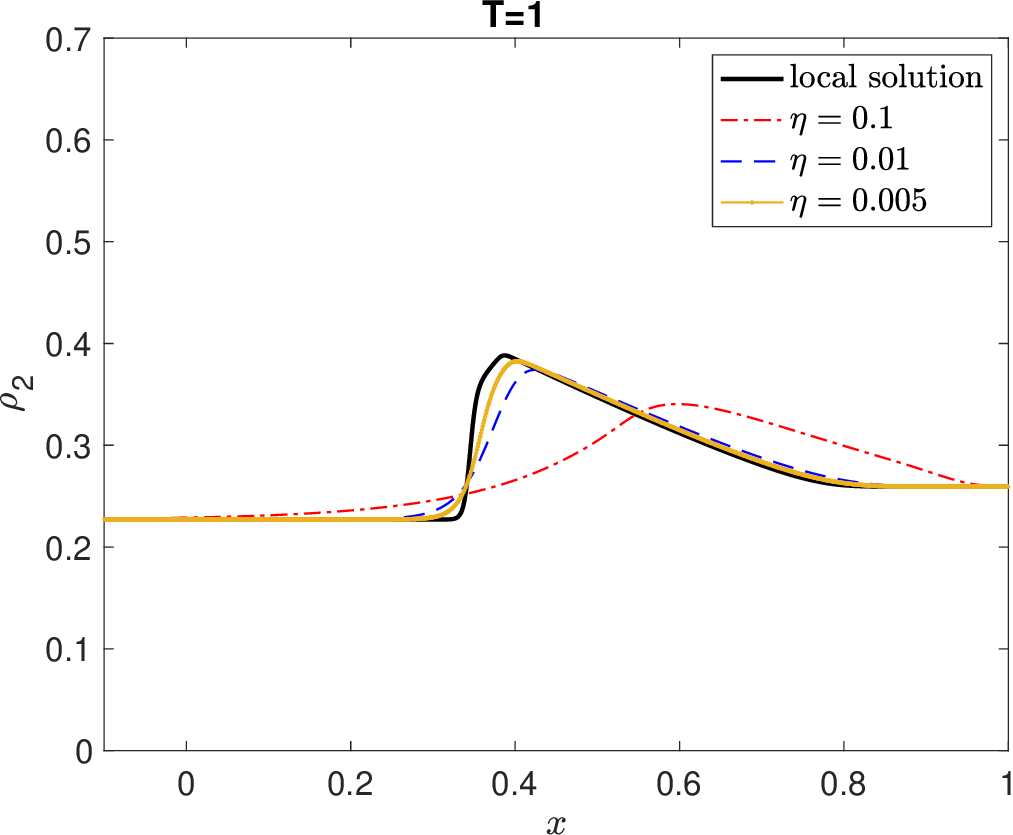}
	\caption{Convergence to the local solution for $\eta\to 0 $ with the initial datum \(\brho_{1}\equiv0.6\chi_{\R_{\geq0}}\) and \(\brho_{2}\equiv0.4\chi_{\R_{\leq0.1}}\) and time points \(T=0.3\) (top row) and \(T=1\) (bottom row),}
 \label{fig:convergence_to_0}
\end{figure}

\begin{figure}
\centering
	\hspace*{-1pt}
		\includegraphics[scale=0.27]{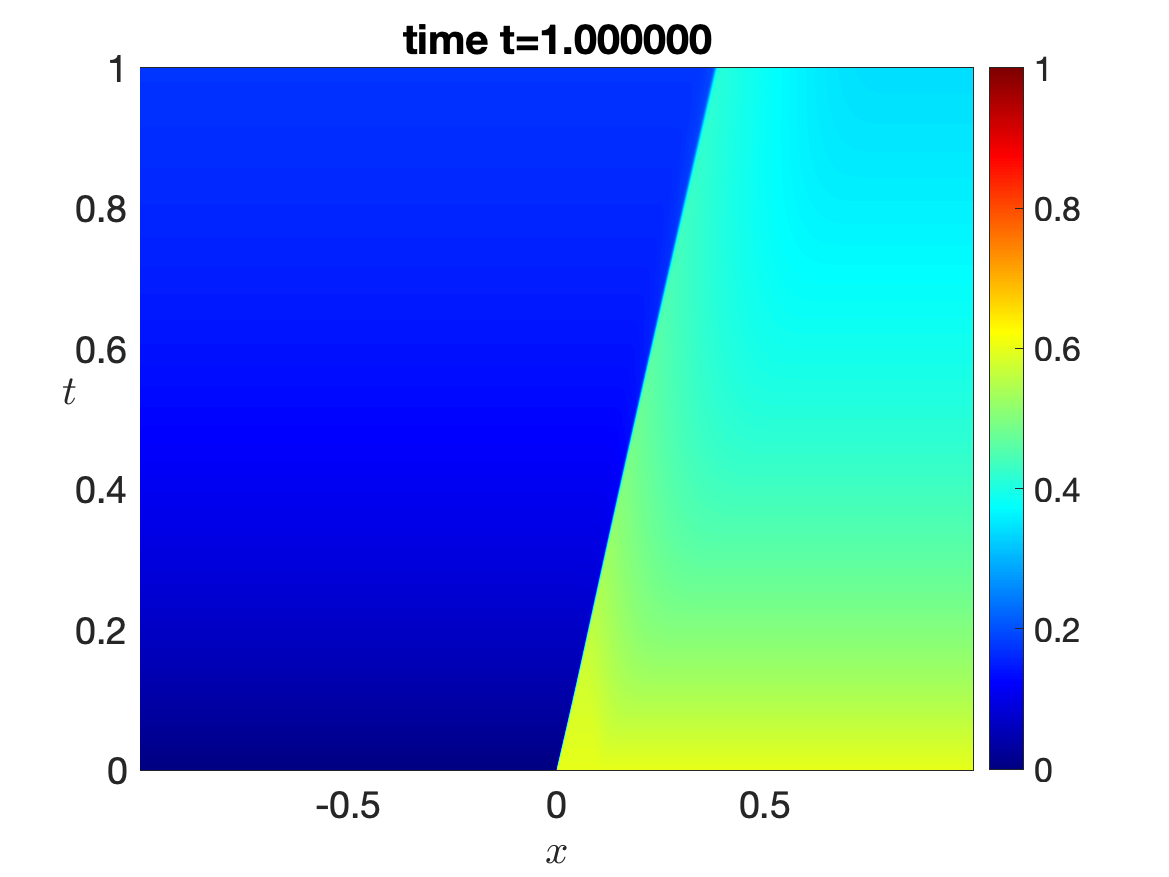}
  \includegraphics[scale=0.27]{./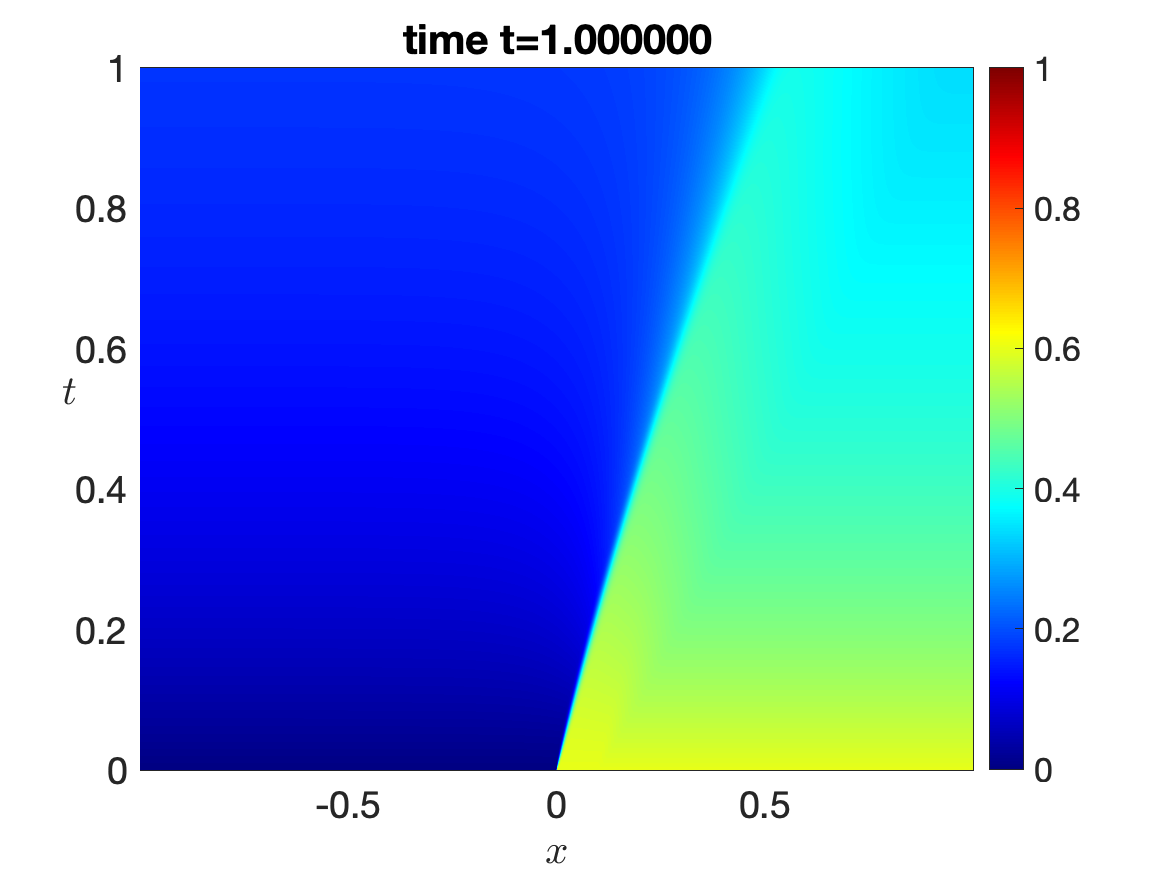}
	\includegraphics[scale=0.27]{./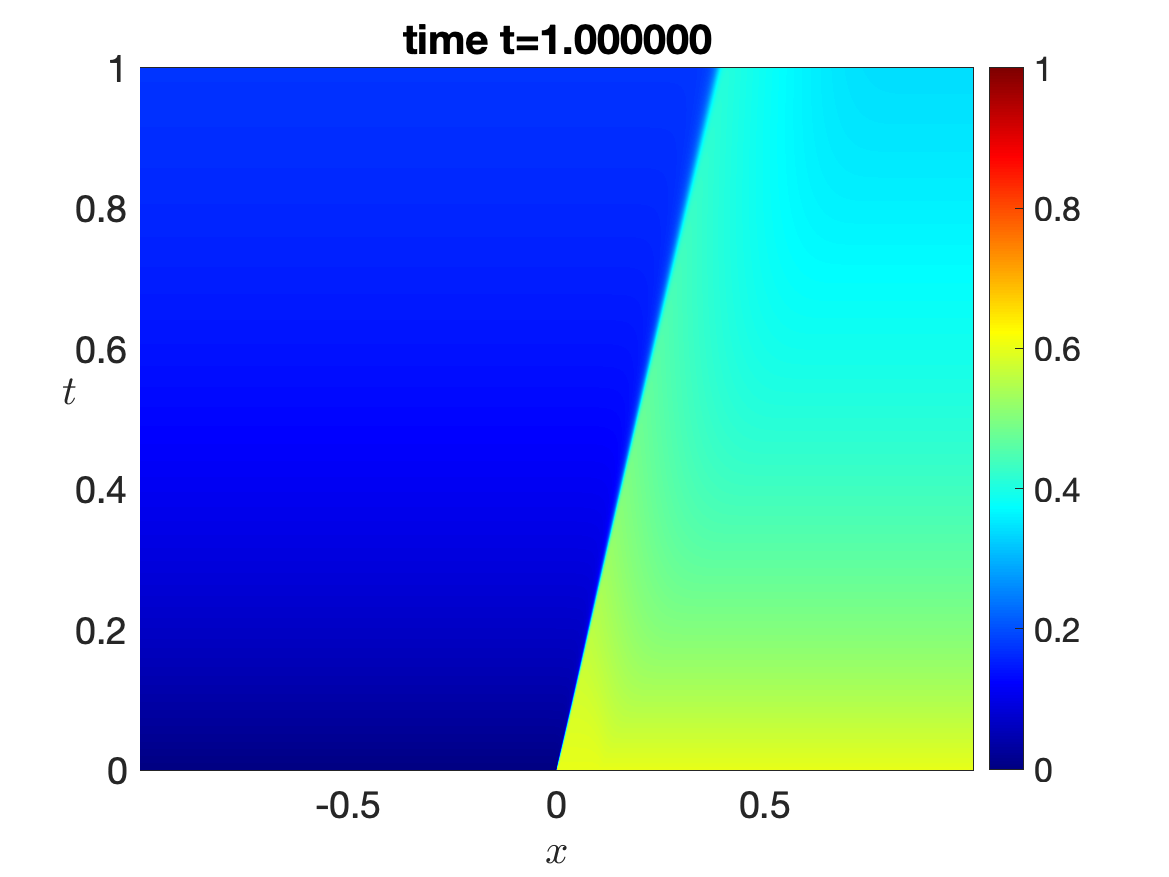}

   \includegraphics[scale=0.27]{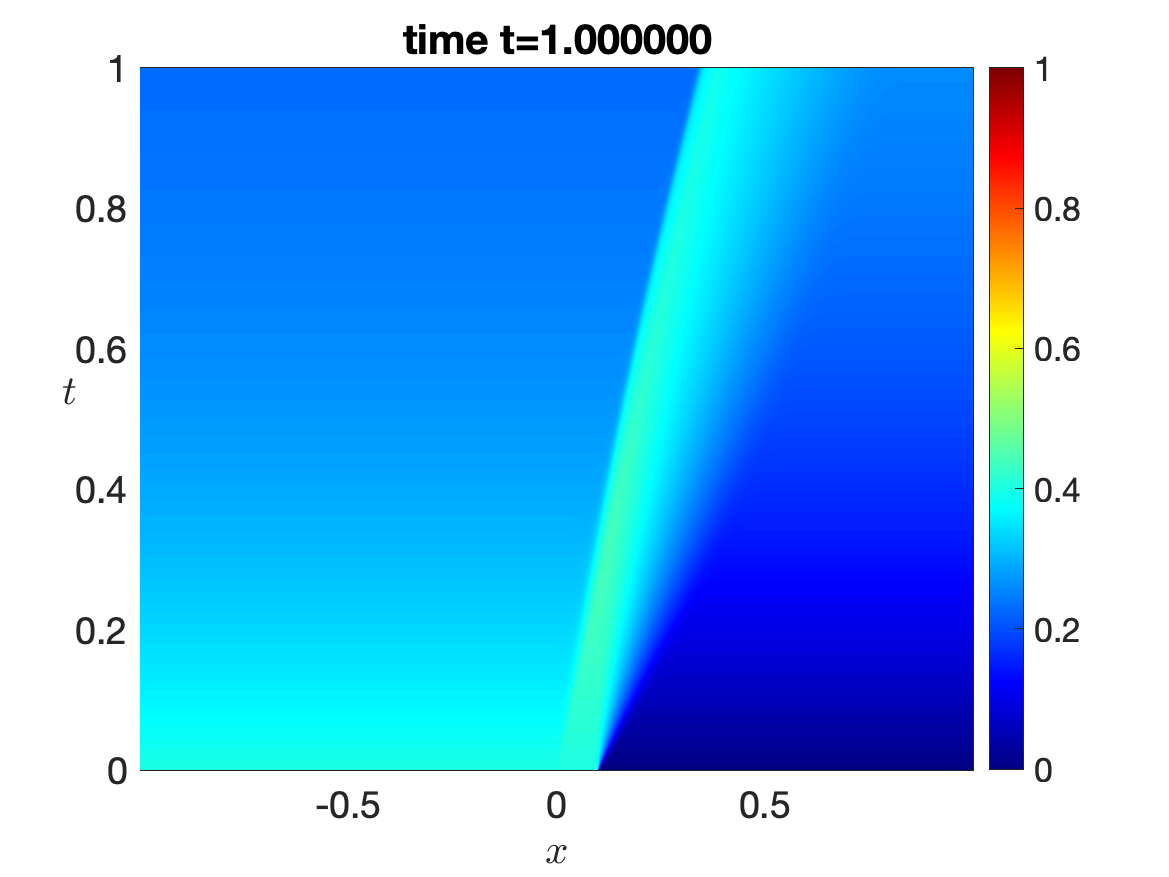}
		\includegraphics[scale=0.27]{./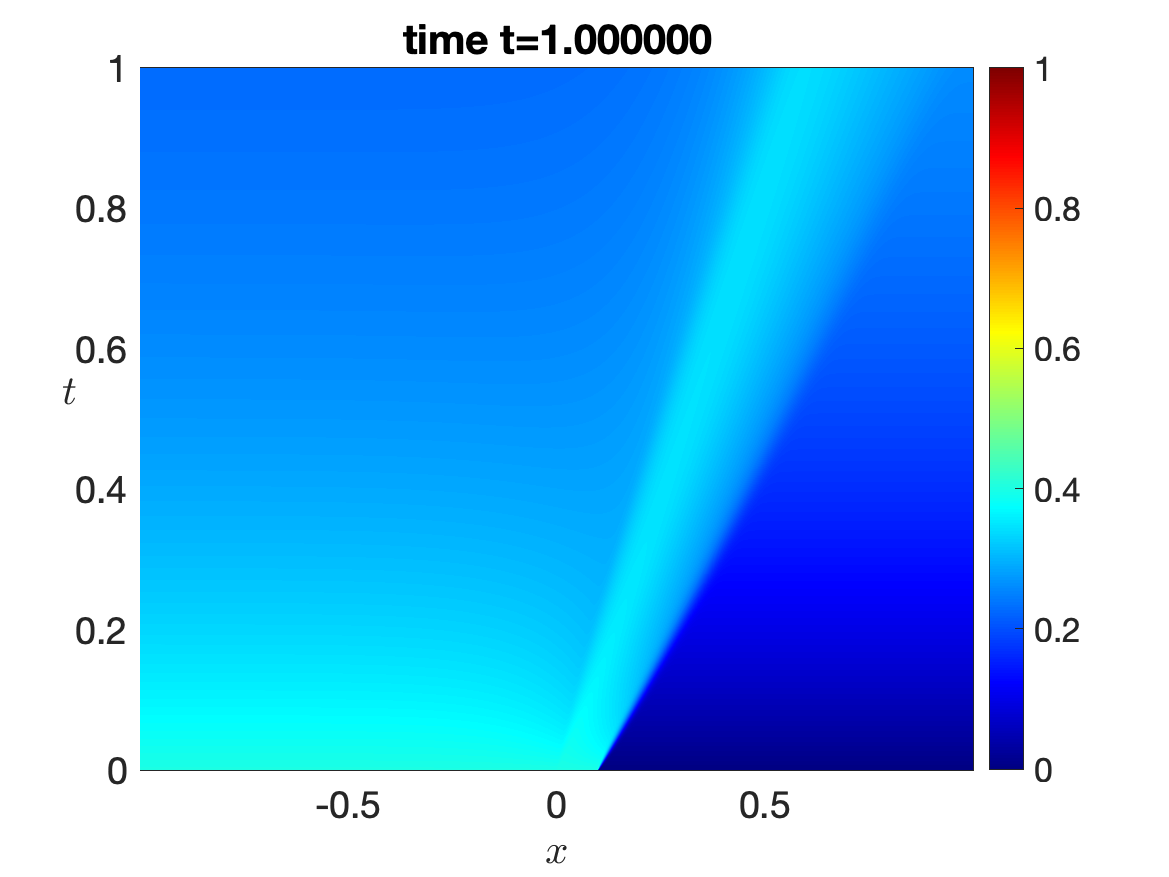}
  		\includegraphics[scale=0.27]{./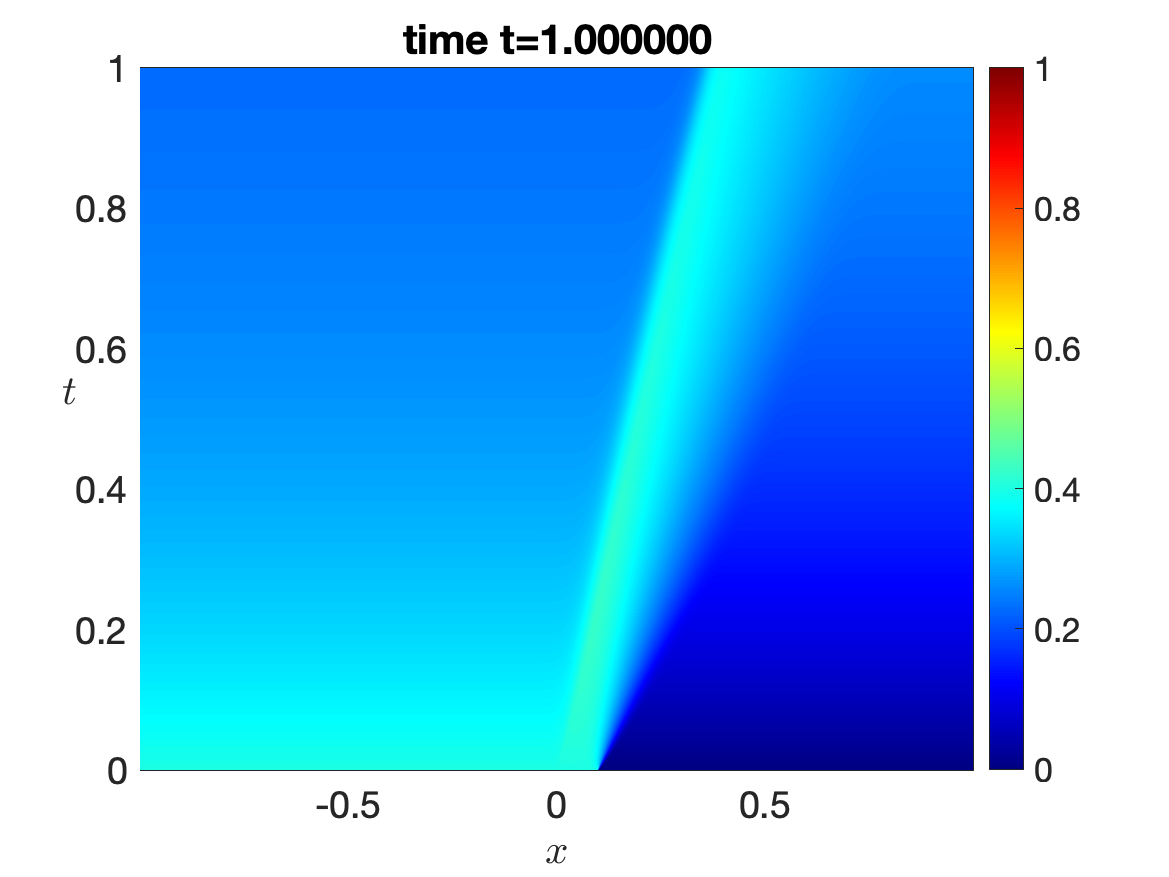}
	\caption{\((t,x)-\)plots of the local  and nonlocal solutions with \(\eta\in\{0, 0.1, 0.005\}\), from left to right. In the first row,  \(\brho_1\) is shown,  and in the second row, \(\brho_2\).} \label{fig:tx_plots}
\end{figure}
Clearly, the claimed convergence can be observed for smaller \(\eta\in\R_{>0}\). 
Moreover, in \Cref{fig:tv_plots}, the total variation is depicted as it varies with different values of $\eta$. Furthermore, it can be seen that, for the chosen source term \(S(\brho_1, \brho_2)\coloneqq\brho_2-\brho_1\) the total variation decreases (and not just finite as proven in  \cref{thm:tv_bound}). However, as anticipated in a nonlocal approximation, the total variation decreases as \(\eta\) increases.
\begin{figure}
\centering
	\hspace*{-1pt}
  \includegraphics[scale=0.40]{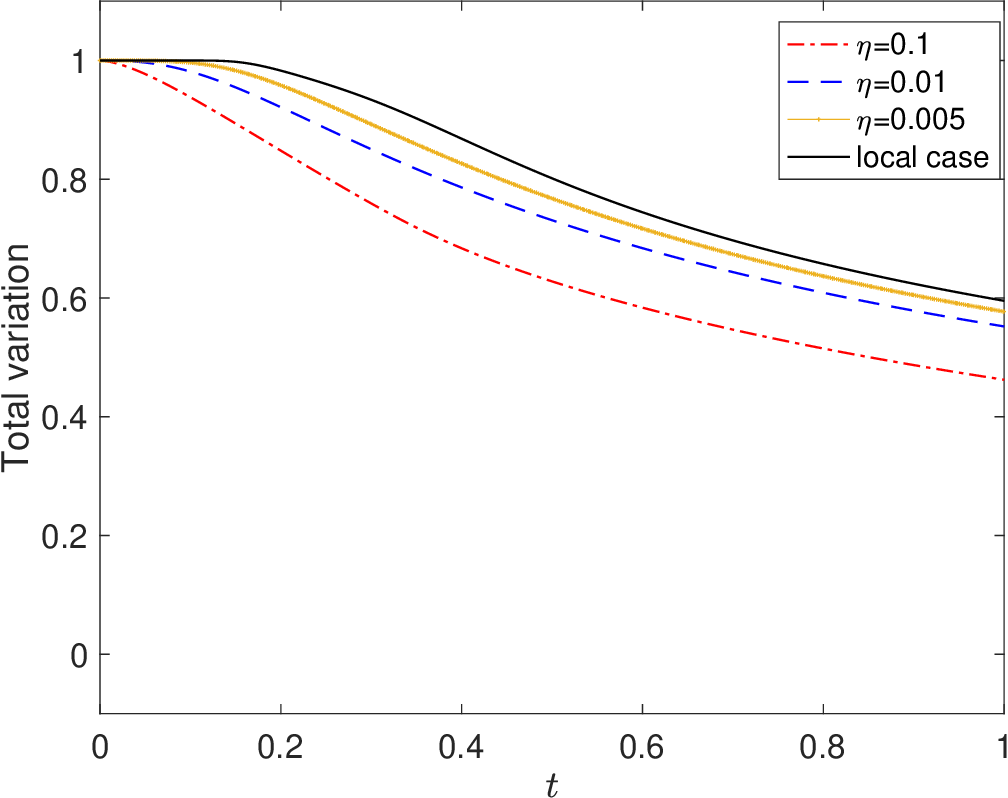}\\
\caption{Plots of the total variation \(\big|\brho^{1}_{\eta}(t,\cdot)\big|_{TV(\R)}+ \big|\brho^{2}_{\eta}(t,\cdot)\big|_{TV(\R)}\) for different $\eta\in\{0.1, 0.01, 0.005\}.$ } \label{fig:tv_plots}
\end{figure}

\section{Conclusions and open problems}\label{sec:conclusions}
In this paper, an analytical proof of nonlocal-to-local convergence for a system of balance laws, which models lane-changing traffic flow, was presented. Coupling occurred via the right-hand side. 
One crucial aspect was the ability to express the nonlocal system in terms of a system of nonlocal terms, facilitated by selecting the exponential kernel (though generalizations similar to \cref{rem:generalizations} should be readily achievable). 

The presented work, however, only scratches the surface of the singular limit problem for systems due to its ``weak'' coupling via the right-hand sides only. In a future study, it would be desirable to take into account coupling in the velocity functions of the dynamics.

Another interesting related problem involves investigating the singular limit problem for scalar nonlocal conservation laws in the context of bounded domains. Existence, uniqueness, and stability results have already been established i n this regard (for example, see \cite{KeimerPflugBounded2018,bayen2021boundary, colombo2018nonlocal}. However, in the system case, addressing the singular limit problem remains an open challenge. We currently lack the capability to obtain uniform \(TV\) estimates, and the manner in which we would converge to the boundary conditions, as defined by Bardos-Leroux-Nédélec \cite{BardosLerouxNedelec79} in the local case, remains unclear.

\bibliographystyle{plain}
\bibliography{biblio.bib}
\end{document}